\documentclass{article}

\usepackage{arxiv}

\usepackage[utf8]{inputenc} 
\usepackage[T1]{fontenc}    
\usepackage{hyperref}       
\usepackage{url}            
\usepackage{booktabs}       
\usepackage{amsfonts}       
\usepackage{nicefrac}       
\usepackage{microtype}      
\usepackage{lipsum}		    
\usepackage{graphicx}
\usepackage[square,sort,comma,numbers]{natbib}
\usepackage{doi}
\usepackage{float}
\usepackage{caption}
\usepackage{subcaption}
\usepackage{xcolor}
\usepackage{amsmath}
\usepackage{amssymb}
\usepackage{amsthm} 

\renewcommand{\phi}{\varphi}
\newtheorem{theorem}{Theorem}
\newtheorem{proposition}[theorem]{Proposition}%

\title{Generalized high-order minimization-based polynomial corrections on unfitted spectral elements for the Poisson problem}

\author{\href{https://orcid.org/0000-0002-7514-301X}{\includegraphics[scale=0.06]{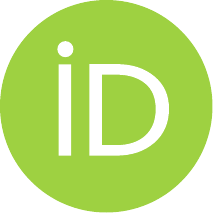}\hspace{1mm}Mirco Ciallella}$^{*\dagger}$ \\
	Laboratoire Jacques-Louis Lions\\
	Université Paris Cité\\
	75013, Paris \\
	\texttt{mirco.ciallella@u-pariscite.fr} \\
\And
\href{https://orcid.org/0000-0001-6698-2623}{\includegraphics[scale=0.06]{Figures/orcid.pdf}\hspace{1mm}Jens Visbech}$^*$ \\
	Department of Applied Mathematics and Computer Science\\
	Technical University of Denmark\\
	2800, Kgs. Lyngby \\
	\texttt{jvis@dtu.dk} \\
}

\renewcommand{\headeright}{Ciallella and Visbech (2026)}
\renewcommand{\shorttitle}{An \textit{arXiv} Preprint}

\hypersetup{
pdftitle={},
pdfsubject={},
pdfauthor={},
pdfkeywords={},
}

\begin{document}

\maketitle

$^*$ Shared first-authorship; both authors contributed equally to the completion of this work. \\
$^\dagger$ Corresponding author.

\vspace{5mm}

\begin{abstract}
	Higher-order finite element methods are effective for solving partial differential equations. However, applying them in complex curved domains is often difficult due to challenges in creating high-quality curvilinear meshes. Unfitted, or embedded, methods provide a valid alternative by avoiding complete mesh generation and curved element integration, but their accuracy can suffer from geometric errors introduced during embedding. In this work, we explore a new family of polynomial corrections obtained by solving a local minimization problem to improve the consistency of embedded boundary methods. This approach generalizes existing techniques such as the shifted boundary method (SBM) and the reconstruction for off-site data (ROD) method. Unlike the SBM, which depends on truncated Taylor expansions, the proposed family of polynomial corrections is derived from a constrained minimization problem, similar to the ROD method. We demonstrate that this approach yields better system conditioning than the original SBM and provides a flexible framework that can be applied pointwise, without solving the full ROD linear system for each boundary element. This paper presents four formulations based on different functionals and extends the family of minimization-based polynomial corrections to handle Neumann and Robin boundary conditions, demonstrating that an elegant formulation is possible in this context. Several numerical experiments for the Poisson problem are presented to show that the generalized polynomial corrections achieve high-order accuracy across all boundary conditions.
\end{abstract}

\keywords{High-order \and Unfitted/immersed/embedded boundary methods \and Generalized polynomial corrections \and Spectral element method \and Shifted boundary method \and Reconstruction for off-site data \and Poisson problem}

\section{Introduction}\label{sec:intro}

Many real-world problems in computational physics are governed by partial differential equations (PDEs) defined on complex domains with curved geometries. Examples can be found across various engineering and physical applications, including the simulation of blood flow in patient-specific vascular networks \cite{taylor2009patient}, the prediction of ice build-up on aircraft surfaces \cite{beaugendre2003fensap}, and shape optimization problems in structural and aerodynamic design \cite{imam1982three}. Other instances include noise propagation around aircrafts \cite{tam2004computational}, room acoustics \cite{pind2019time}, and free-surface flows \cite{scardovelli1999direct}, hereunder nonlinear water wave simulations \cite{melander2025high, engsigkarup2016stabilised, visbech2025fnpf}. Numerical methods, such as the finite element method (FEM) \cite{zienkiewicz1977finite}, are widely used to solve these problems. In particular, high-order methods, such as the spectral element method (SEM) \cite{karniadakis2005spectral}, can achieve very high accuracy even on relatively coarse meshes. If the solution is smooth, the discretization errors will decay exponentially as the polynomial basis order increases. However, implementing these on curved domains presents significant challenges, primarily due to the need for precise geometric representation and the complexity of numerical integration over curved elements. To avoid the difficulties of generating high-quality body-fitted meshes, unfitted (also referred to as embedded or immersed) meshes can be a crucial alternative.
The original concept behind these methods traces back to the volume penalization approach introduced by Peskin in the 1970s \cite{peskin1977numerical,peskin2002immersed}. Since then, many techniques have been proposed in both finite difference \cite{leveque1994immersed,mittal2005immersed}, finite volume \cite{cottet2008eulerian,ciallella2025semi}, and finite element \cite{boffi2003finite,boffi2015finite} frameworks. In contrast to volume penalization, the finite element context has seen the development of approaches that replace the exact boundary with a less precise representation, starting from the generalized finite element method (GFEM) \cite{strouboulis2000design}, the extended finite element method (XFEM) \cite{hansbo2002unfitted}, and the cut finite element method (CutFEM) \cite{burman2015cutfem}. However, dealing with cut elements can result in geometrically ill-posed configurations and poor conditioning. To address these challenges, numerical methods for cut elements with enhanced stability properties have been successfully developed in \cite{burman2010ghost, burman2015cutfem, burman2025cut, burman2014fictitious}. Recently, the idea of creating embedded methods that completely avoid handling cut elements has emerged as a turning point, eliminating the need for additional stabilization and bypassing the geometric complexity of defining cut elements through intersections and similar operations. In this approach, the background mesh is independent of the domain boundaries, and the computational domain is formed simply by removing elements outside the physical domain. To prevent the small-cell problem, the elements intersected by the physical geometry are also discarded, and boundary conditions are imposed on the so-called surrogate boundary instead of on the cut-cell boundary. However, this geometric approximation is only first-order accurate, meaning the discretization error is proportional to the mesh size, $h$.\\
This is because the total discretization error combines the error inherent in the numerical method for the PDE with the error arising from the geometrical approximation of the boundary. Embedded boundary methods, such as the shifted boundary method (SBM) \cite{main2018shifted,song2018shifted}, compensate for this geometrical error by appropriately adjusting the boundary conditions on the computational domain. The recently developed SBM imposes boundary conditions consistently through truncated Taylor expansions and has been successfully extended to problems with moving interfaces \cite{atallah2025high, colomes2026generalized, ciallella2022extrapolated,carlier2023enriched} as well as to high-order accuracy \cite{atallah2022high, ciallella2023shifted, collins2023penalty}. In \cite{ciallella2023shifted}, the authors developed an efficient way to achieve an arbitrary high-order SBM on curved domains, but using affine elements. This was done without explicitly computing the higher-order derivatives of the Taylor expansion. The method, called the shifted boundary polynomial correction, is based on off-element evaluation (extrapolation) of the local basis functions onto the true physical boundary. The polynomial correction-based SBM was then extended to Neumann and Robin boundaries on completely unfitted meshes in \cite{visbech2025spectral} in the context of the Poisson problem. Here, the authors addressed issues related to SBM conditioning at very high polynomial orders with the classical extrapolation of the local basis functions. By changing the element selection and the mapping between the true and computational boundaries, significant differences in conditioning behavior were observed. The polynomial corrections have later been applied in various works, e.g., \cite{collins2026gap, BOSCHERI2025114215, visbech2026recent}
In parallel, another research group developed an alternative embedded approach called the reconstruction for off-site data (ROD) method \cite{costa2018very,costa2019very}, to achieve high-order accuracy on curved boundaries discretized with simplex elements within a finite volume framework. The concept behind ROD is to create a modified polynomial in the boundary element that closely matches the internal one, while precisely enforcing the boundary condition on the actual boundary using Lagrange multipliers. This minimization-based approach results in a linear system that must be solved locally at each boundary element. The method was subsequently extended to the discontinuous finite element framework for elliptic and hyperbolic PDEs \cite{santos2024very,ciallella2024very}. Recent analysis in the context of Dirichlet boundary conditions for hyperbolic PDEs \cite{ciallella2025minimization} showed that the solution of the ROD minimization problem can be reformulated as a simple polynomial correction with a different scaling coefficient than in the SBM case. Notably, although this polynomial correction reduces to a reformulation of the ROD approach in one dimension, in multiple space dimensions, it can be applied point-wise in the spirit of the SBM, resulting in a method that maintains the minimization-based nature while avoiding the inversion of any linear system. This scaling coefficient has been shown to be essential for the method’s stability with hyperbolic PDEs. Indeed, whereas the SBM exhibits strict stability restrictions \cite{ciallella2025stability}, ROD-type methods are found to be more robust \cite{ciallella2025minimization}.\\

\subsection{Paper contributions}

In this paper, we investigate the use of minimization-based polynomial corrections in the context of elliptic PDEs, i.e., a Poisson problem with an reaction term. We demonstrate that, without resorting to any special element selection or mapping \cite{visbech2025spectral}, the system's conditioning can be improved over the original SBM. Furthermore, we show that the original ROD minimization problem can be modified to yield a wide family of arbitrary high-order minimization-based polynomial corrections, each with potentially distinct properties. Since complex applications typically involve a mix of Dirichlet, Neumann, and Robin boundary conditions to model physical phenomena, we also extend the novel framework to obtain elegant polynomial corrections also for general Neumann and Robin boundary conditions on unfitted meshes. This novel framework is herein developed within the high-order spectral element method (SEM) due to \cite{patera1984spectral}, which combines a single-domain spectral collocation method with a multi-domain finite element approach. 

\subsection{Paper outline}

Following this section's introduction, we outline the main mathematical problem and preliminaries in Section \ref{sec:math_prop}, including details on how the unfitted domain is constructed and the spectral element approximation. Next, in Section \ref{sec:var_forms}, we present the Nitsche-based variational formulations, starting with a conformal boundary and then moving to an unfitted boundary. After that, we introduce the original shifted-boundary polynomial correction and the reconstruction-for-off-site-data methodology in Section \ref{sec:sbm_rod}. As a natural extension, Section \ref{sec:PolyCorrDir} highlights the novelty of this work: the family of minimization-based polynomial corrections that employ two different norms and two boundary condition constraints. Finally, numerical results for 1D and 2D cases are presented in Sections \ref{sec:results_1D} and \ref{sec:results_2D}, respectively, followed by a conclusion in Section \ref{sec:conclusion}.
\section{Mathematical Problem and Preliminaries}\label{sec:math_prop}

In a two-dimensional (2D) domain, $\Omega$, we consider the classical Poisson problem with a reaction term subject to Dirichlet, Neumann, and Robin boundary conditions. With that, the completely enclosing boundary is $\partial\Omega = \Gamma = \Gamma_D \cup \Gamma_N \cup \Gamma_R$. Our task is to find $u \in C^2(\Omega)$, such that
\begin{subequations} \label{eq:Poisson_strong}
\begin{align} 
        -\boldsymbol{\nabla}^2 u + \beta u  &= f, \quad \text{in} \quad \Omega,  \\
        u  &= u_D, \quad \text{on} \quad \Gamma_D, \\
        \boldsymbol{\nabla} u \cdot \boldsymbol{n}  &= q_N, \quad \text{on} \quad \Gamma_N, \\
         u + \varepsilon \boldsymbol{\nabla} u \cdot \boldsymbol{n}  &= u_{RD}+\varepsilon q_{RN} = g_R,  \quad \text{on} \quad \Gamma_R,
\end{align}
\end{subequations}
with $\boldsymbol{n} = (n_x,n_y)$ being the outward-facing normal on $\Gamma$, $\beta \geq 0$ is a scalar for the reaction term to ensure well-posedness if $\Gamma_D = \emptyset$. For this work, $\beta = 1$, thus omitted in the following. Moreover, $\boldsymbol{\nabla} = (\partial_x,\partial_y)$ is Cartesian differential operators and $f$ is a known forcing function. We also consider the one-dimensional (1D) domain for simplicity and to enable better visualization and proof-of-concept analysis. In this case, $\boldsymbol{n} = n = n_x$ and $\boldsymbol{\nabla} = \nabla = \partial_x$.

\subsection{Unfitted computational domain}\label{sec:unfitted_comp_domain}

In this work, we consider classical finite element tesselations of the \textit{true} domain, $\Omega$. However, in contrast to the more traditional boundary-fitted approaches, where the mesh-tessellation is performed so that elements conform to the domain and its boundaries, we consider an unfitted approach. Here, the mesh generation is decoupled from the domain geometry, and the unfitted computational \textit{surrogate} domain, denoted $\tilde{\Omega}_h$, can be constructed as follows: 
\begin{itemize}
    \item[i)] Generating a simple structured background mesh, $\mathcal{T}_h$, about which $\text{clos}(\Omega) \subseteq \mathcal{T}_h$, using only quadrilaterals for this work.
    \item[ii)] Then, embedding $\Omega$ into this mesh and identifying, elements inside the domain, $\mathcal{T}_h^{\text{in}}$, outside the domain, $\mathcal{T}_h^{\text{out}}$, and on the boundary, $\mathcal{T}_h^{\text{on}}$, such that $\mathcal{T}_h = \mathcal{T}_h^{\text{in}} \cup \mathcal{T}_h^{\text{out}} \cup \mathcal{T}_h^{\text{on}}$. This procedure can be performed in various ways; however, we base it on a level-set description of the true boundary. To support the understanding, see Figure \ref{fig:unfitted_domain_illustration} (top left).
    \item[iii)] The unfitted computational domain, $\tilde{\Omega}_h$, is constructed by selecting the appropriate elements to represent $\Omega$. In the context of the SBM, the traditional choice have been to set $\tilde{\Omega}_h = \mathcal{T}_h^{\text{in}}$ \cite{main2018shifted}, e.g., see Figure \ref{fig:unfitted_domain_illustration} (top-right), but other variant also exists, see e.g., \cite{visbech2025spectral,yang2024optimal,colomes2026generalized}, including some of or all of $\mathcal{T}_h^{\text{on}}$.
\end{itemize}

\begin{figure}
    \centering
    \includegraphics[scale=0.75]{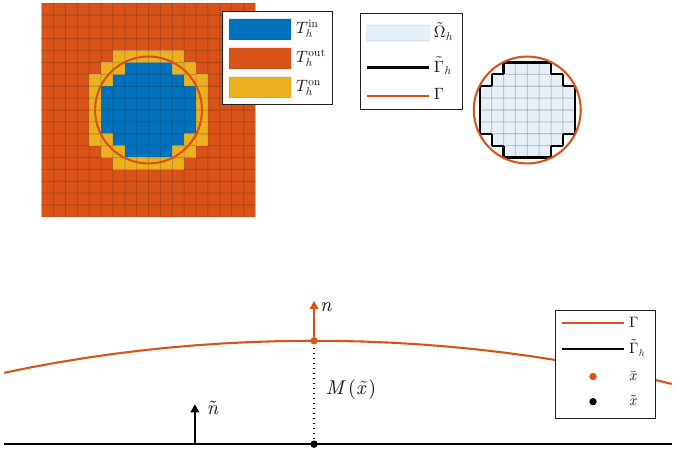}
    \caption{Top-left: The background mesh, $\mathcal{T}_h$, and its identifications based on level sets (in, out, on), $\mathcal{T}_h^{\text{in}}$, $\mathcal{T}_h^{\text{out}}$, and $\mathcal{T}_h^{\text{on}}$. Top-right: The true boundary, $\Gamma$, and the unfitted computational domain, $\tilde{\Omega}_h$, and its boundary, $\tilde{\Gamma}_h$. Bottom: Notation for normals and points on the true and computational boundary and mapping between them. }
    \label{fig:unfitted_domain_illustration}
\end{figure}

The final unfitted computational domain, $\tilde{\Omega}_h$, will consists of $N_{\text{elm}}$ non-overlapping affine elements as $ \tilde{\Omega}_h  = \bigcup_{n=1}^{N_{\text{elm}}} \tilde{\Omega}^n_h$ with $\tilde{\Omega}^n_h$ being the $n$'th element of $\tilde{\Omega}_h$. The boundary of $\tilde{\Omega}_h$ is denoted as $\tilde{\Gamma}_h = \tilde{\Gamma}_{h,D} \cup \tilde{\Gamma}_{h,N} \cup \tilde{\Gamma}_{h,R}$ with parts for imposing Dirichlet, Neumann, and Robin boundary conditions, respectively, if applicable, and simply constructed as the collection of element edges without an adjacent neighbor. Moreover, their associated normals are denoted as $\tilde{\boldsymbol{n}} = (\tilde{n}_x,\tilde{n}_y)$. Now that the concepts of the true and computational domains have been defined, another key ingredient is needed. Namely, a mapping, $\mathcal{M}$, between $\tilde{\Gamma}_h$ and $\Gamma$, where the boundary conditions are given on the latter. Using this mapping will enable us to define extensions of any function from $\tilde{\Gamma}_h$ to $\Gamma$ (and vice versa). The mapping is given as
\begin{equation}\label{eq:mapping}
     \mathcal{M}(\tilde{\boldsymbol{x}}): \tilde{\Gamma}_h \mapsto \Gamma, \quad \quad \quad \tilde{\boldsymbol{x}} \mapsto \bar{\boldsymbol{x}},
\end{equation}
where $\tilde{\boldsymbol{x}} = (\tilde{x},\tilde{y}) \in \tilde{\Gamma}_h$ and $\bar{\boldsymbol{x}} = (\bar{x},\bar{y}) \in \Gamma$. From this, we define a distance function as $\boldsymbol{d} = \boldsymbol{d}(\tilde{\boldsymbol{x}}) =\bar{\boldsymbol{x}} - \tilde{\boldsymbol{x}}$. For this work, we use the mapping defined along the true normal, $\boldsymbol{n}$, on $\Gamma$. Other choices are valid as well; see, e.g., \cite{visbech2025spectral}. See Figure \ref{fig:unfitted_domain_illustration} (bottom) for a conceptualization of the aforementioned quantities.

\subsection{Spectral element approximation}\label{sec:SEM}

Next, we introduce a high-order spectral element approximation to the computational domain. See \cite{karniadakis2005spectral,hesthaven2008nodal} and the references herein for element-based polynomial expansions. For this, we consider the following finite element space of piecewise continuous polynomials of degree $p$ as
\begin{equation}\label{eq:Wp}
\mathcal{W}_h^p = \left\{ w \in C^0(\Omega) : \forall n \in \{1,\ldots,N_{\text{elm}}\},\; w|_{\tilde{\Omega}^n_h} \in \mathbb{P}^p(\tilde{\Omega}^n_h) \right\}.
\end{equation}
As we solely consider structures of quadrilaterals, all geometrical and polynomial approximation quantities are constructed from tensor products of 1D spaces/geometries.

The global discrete solution, $u_h$, can be expressed as $N_{\text{elm}}$ local solutions, $u_h^n$, on each element, $\tilde{\Omega}^n_h$, as 
\begin{equation}
    u_h = \bigoplus_{n = 1}^{N_{\text{elm}}} u^n_h, \quad \text{with} \quad u^n_h \approx \sum_{m=1}^{N_{\text{ep}}} \hat{u}^n_{h,m} \psi_{m}=\sum_{i=1}^{N_{\text{ep}}} u^n_{h,i} \phi_{i},
\end{equation}
where each local solution equivalently is expressed using modal or nodal basis functions of up to degree $p$. For the modal approximation, the sum is over $N_{\text{ep}}$ modes (expansion coefficients), $\hat{u}^{n}_{h,m}$, and basis functions, $\psi_{m}$, taken as Legendre polynomials. For the nodal approximation, the sum is over $N_{\text{ep}}$ local solution values, $u^n_{h,i}$, and their associated Lagrange polynomials, $\phi_i$. The $N_{\text{ep}}$ quadrature element points is taken as Gauss-Lobatto-Legendre (GLL) distributed and connected to $p$ as $N_{\text{ep}} =(p+1)^2$ \cite{ronquist1987legendre}. In vectorized form, we have
\begin{equation}
    u_h^n = \boldsymbol{\psi}^T(x) \hat{\boldsymbol{u}}^n_h = \boldsymbol{\phi}^T(x) \boldsymbol{u}^n_h,
\end{equation}
where 
$\boldsymbol{\psi} = [\psi_1,\ldots,\psi_{N_{\text{ep}}}]^T$,
$\hat{\boldsymbol{u}}^n_h = [\hat{u}^n_{h,1},\ldots,\hat{u}^n_{h,N_{\text{ep}}}]^T$,
$\boldsymbol{\phi} = [\phi_1,\ldots,\phi_{N_{\text{ep}}}]^T$, and
$\boldsymbol{u}^n_h = [u^n_{h,1},\ldots,u^n_{h,N_{\text{ep}}}]^T$. The nodal-modal duality is given through a generalized Vandermonde matrix as
\begin{equation}
    \boldsymbol{u}^n_h = \boldsymbol{\mathcal{V}} \hat{\boldsymbol{u}}^n_h, \quad \text{where} \quad \boldsymbol{\mathcal{V}}_{ij} = \psi_j(\boldsymbol{r}_i),  
\end{equation}
where $\boldsymbol{r}$ is obtained through an inverse affine mapping to a reference domain, $\mathcal{R}$, as this work is entirely iso-parametric. The affine mapping from any physical elements, $\tilde{\Omega}^n_h$, to this reference space is $\Psi = \Psi(\boldsymbol{r}): \tilde{\Omega}^n_h \mapsto \mathcal{R}$, such that $\boldsymbol{x} = \Psi(\boldsymbol{r})$ and $\boldsymbol{r} = \Psi^{-1}(\boldsymbol{x})$, where $\boldsymbol{r} = (r,s)$ and $\boldsymbol{x} = (x,y)$. We have $\mathcal{R} = \{\boldsymbol{r} = (r,s) | (r, s)  \geq -1 ; (r,s) \leq 1 \}$. Following \cite{hesthaven2008nodal}, all nodal spectral element operators, e.g., for integration and differentiation, are built using modal elemental contributions, which are then converted into nodal space using these generalized Vandermonde matrices. Consult \cite{hesthaven2008nodal} for in-depth details. 
\section{Variational formulations: From conformal to unfitted domains}\label{sec:var_forms}

For completeness, consider an exact tesselation of $\Omega$ that is completely conformal to its boundaries and denote its mesh by $\Omega_h$ with boundaries $\Gamma_{h,D}$, $\Gamma_{h,N}$, and $\Gamma_{h,R}$. Assume that our finite elements space, $\mathcal{W}_h^p$, from \eqref{eq:Wp} is applicable to $\Omega_h$ as well. Then, we recast the Poisson problem in \eqref{eq:Poisson_strong} in a generalized weak form based on the principles of Nitsche's method, \cite{Nitsche1971,juntunen2009nitsche,benzaken2024constructing,visbech2025spectral}. We encourage the reader to consult \cite{visbech2025spectral} for more details on the asymptotic-preserving properties, among others. The generalized variational problem is to find $u_h \in \mathcal{W}^p_h$ such that $\forall w_h \in\mathcal{W}^p_h$, where
\begin{equation}
    a \langle u_h,w_h \rangle = b\langle w_h \rangle,
\end{equation}
with the following decomposition, based on domain and various boundary contributions, of the bilinear and linear form
\begin{subequations}
\begin{align}
     a \langle u_h,w_h \rangle &= a\langle u_h,w_h \rangle_{\Omega_h} + a\langle u_h,w_h \rangle_{\Gamma_{h,D}} + a \langle u_h,w_h \rangle_{\Gamma_{h,N}} + a \langle u_h,w_h \rangle_{\Gamma_{h,R}}, \\
     b\langle w_h \rangle &= b\langle w_h \rangle_{\Omega_h} + b\langle w_h \rangle_{\Gamma_{h,D}} + b \langle w_h \rangle_{\Gamma_{h,N}} + b\langle w_h \rangle_{\Gamma_{h,R}},
\end{align}
\end{subequations}
where the individual terms are given as
\begin{subequations}
\begin{align}
    a\langle u_h,w_h \rangle_{\Omega_h} &= (\boldsymbol{\nabla} u_h, \boldsymbol{\nabla} w_h)_{\Omega_h} + \beta  ( u_h,  w_h)_{\Omega_h},  \\ 
    a\langle u_h,w_h \rangle_{\Gamma_{h,D}} &= \gamma^{-1} (u_h,w_h)_{\Gamma_{h,D}} - (\boldsymbol{\nabla} u_h \cdot \boldsymbol{n},w_h)_{\Gamma_{h,D}} -(u_h,\boldsymbol{\nabla} w_h \cdot \boldsymbol{n})_{\Gamma_{h,D}},  \\
    a \langle u_h,w_h \rangle_{\Gamma_{h,N}} &= 0, \\
    a \langle u_h,w_h \rangle_{\Gamma_{h,R}} &= \frac{1}{\varepsilon + \gamma} (u_h,w_h)_{\Gamma_{h,R}} 
    - \frac{\gamma}{\varepsilon + \gamma} (\boldsymbol{\nabla} u_h \cdot \boldsymbol{n},w_h)_{\Gamma_{h,R}} \notag \\
    & - \frac{\gamma}{\varepsilon + \gamma} (u_h,\boldsymbol{\nabla} w_h \cdot \boldsymbol{n})_{\Gamma_{h,R}}
    - \frac{\varepsilon\gamma}{\varepsilon + \gamma} (\boldsymbol{\nabla} u_h \cdot \boldsymbol{n},\boldsymbol{\nabla} w_h \cdot \boldsymbol{n})_{\Gamma_{h,R}} ,   
\end{align}
\end{subequations}
and
\begin{subequations}
\begin{align} 
    b\langle w_h \rangle_{\Omega_h} &= (f,w_h)_{\Omega_h},  \\
    b\langle w_h \rangle_{\Gamma_{h,D}} &= \gamma^{-1} (u_D,w_h)_{\Gamma_{h,D}} 
    -(u_D,\boldsymbol{\nabla} w_h \cdot \boldsymbol{n})_{\Gamma_{h,D}} ,  \\
    b \langle w_h \rangle_{\Gamma_{h,N}} &= (q_N,w_h)_{\Gamma_{h,N}},  \\
    b\langle w_h \rangle_{\Gamma_{h,R}} &=  \frac{1}{\varepsilon + \gamma} (g_{R},w_h)_{\Gamma_{h,R}} - \frac{\gamma}{\varepsilon + \gamma} (g_{R},\boldsymbol{\nabla} w_h \cdot \boldsymbol{n})_{\Gamma_{h,R}}. 
\end{align}
\end{subequations}
Here $(u,w)_{s}  =\int_{s} u\,w\, ds$ is denoting the classical $L^2$ scalar product over the set $s$. Moreover, $\gamma^{-1}$ is a penalty parameter governing the stabilization terms; see, e.g., \cite{juntunen2009nitsche, benzaken2024constructing} and the references herein. For this work, we set $\gamma = h/2$, with $h$ being the element length.

\subsection{Switching to the unfitted domain}
Now, consider the unfitted computational domain, $\tilde{\Omega}_h$ and its boundary, $\tilde{\Gamma}_h = \tilde{\Gamma}_{h,D} \cup \tilde{\Gamma}_{h,N} \cup \tilde{\Gamma}_{h,R}$. Without having given any thought to the boundary conditions, we seek $u_h \in \mathcal{W}^P_h$ such that $\forall w_h \in\mathcal{W}^P_h$, where
\begin{equation}\label{eq:generalized_form}
    a \langle u_h,w_h \rangle = b\langle w_h \rangle,
\end{equation}
with
\begin{subequations}
\begin{align}
     a \langle u_h,w_h \rangle &= a\langle u_h,w_h \rangle_{\tilde{\Omega}_h} + a\langle u_h,w_h \rangle_{\tilde{\Gamma}_{h,D}} + a \langle u_h,w_h \rangle_{\tilde{\Gamma}_{h,N}} + a \langle u_h,w_h \rangle_{\tilde{\Gamma}_{h,R}}, \\
     b\langle w_h \rangle &= b\langle w_h \rangle_{\tilde{\Omega}_h} + b\langle w_h \rangle_{\tilde{\Gamma}_{h,D}} + b \langle w_h \rangle_{\tilde{\Gamma}_{h,N}} + b\langle w_h \rangle_{\tilde{\Gamma}_{h,R}},
\end{align}
\end{subequations}
where
\begin{subequations}\label{eq:unfitted_bilinear_form}
\begin{align}
    a\langle u_h,w_h \rangle_{\tilde{\Omega}_h} &= (\boldsymbol{\nabla} u_h, \boldsymbol{\nabla} w_h)_{\tilde{\Omega}_h} + \beta  ( u_h,  w_h)_{\tilde{\Omega}_h},  \\
    a\langle u_h,w_h \rangle_{\tilde{\Gamma}_{h,D}} &= \gamma^{-1} (u_h,w_h)_{\tilde{\Gamma}_{h,D}} - (\boldsymbol{\nabla} u_h \cdot \tilde{\boldsymbol{n}},w_h)_{\tilde{\Gamma}_{h,D}} -(u_h,\boldsymbol{\nabla} w_h \cdot \tilde{\boldsymbol{n}})_{\tilde{\Gamma}_{h,D}},  \\
    a \langle u_h,w_h \rangle_{\tilde{\Gamma}_{h,N}} &= 0, \\
    a \langle u_h,w_h \rangle_{\tilde{\Gamma}_{h,R}} &= \frac{1}{\varepsilon + \gamma} (u_h,w_h)_{\tilde{\Gamma}_{h,R}} 
    - \frac{\gamma}{\varepsilon + \gamma} (\boldsymbol{\nabla} u_h \cdot \tilde{\boldsymbol{n}},w_h)_{\tilde{\Gamma}_{h,R}} \notag \\
    & - \frac{\gamma}{\varepsilon + \gamma} (u_h,\boldsymbol{\nabla} w_h \cdot \tilde{\boldsymbol{n}})_{\tilde{\Gamma}_{h,R}}
    - \frac{\varepsilon\gamma}{\varepsilon + \gamma} (\boldsymbol{\nabla} u_h \cdot \tilde{\boldsymbol{n}},\boldsymbol{\nabla} w_h \cdot \tilde{\boldsymbol{n}})_{\tilde{\Gamma}_{h,R}} ,   
\end{align}
\end{subequations}
and
\begin{subequations}\label{eq:unfitted_linear_form}
\begin{align} 
    b\langle w_h \rangle_{\tilde{\Omega}_h} &= (f,w_h)_{\tilde{\Omega}_h},  \\
    b\langle w_h \rangle_{\tilde{\Gamma}_{h,D}} &= \gamma^{-1} (\tilde{u}_D,w_h)_{\tilde{\Gamma}_{h,D}} 
    -(\tilde{u}_D,\boldsymbol{\nabla} w_h \cdot \tilde{\boldsymbol{n}})_{\tilde{\Gamma}_{h,D}} ,  \\
    b \langle w_h \rangle_{\tilde{\Gamma}_{h,N}} &= (\tilde{q}_N,w_h)_{\tilde{\Gamma}_{h,N}},  \\
    b\langle w_h \rangle_{\tilde{\Gamma}_{h,R}} &=  \frac{1}{\varepsilon + \gamma} (\tilde{g}_{R},w_h)_{\tilde{\Gamma}_{h,R}} - \frac{\gamma}{\varepsilon + \gamma} (\tilde{g}_{R},\boldsymbol{\nabla} w_h \cdot \tilde{\boldsymbol{n}})_{\tilde{\Gamma}_{h,R}},
\end{align}
\end{subequations}
where the unknown Dirichlet, Neumann, and Robin boundary condition values are defined as
\begin{subequations}\label{eq:unknown_boundary_data}
\begin{align}
    \tilde{u}_D    &= u(\tilde{\boldsymbol{x}}),                                          & \text{for} \quad  \tilde{\boldsymbol{x}} \in \tilde{\Gamma}_{h,D},  \\
    \tilde{q}_N    &= \boldsymbol{\nabla} u(\tilde{\boldsymbol{x}}) \cdot \tilde{\boldsymbol{n}},   &  \text{for} \quad  \tilde{\boldsymbol{x}} \in \tilde{\Gamma}_{h,N},  \\
    \tilde{g}_{R} &= u(\tilde{\boldsymbol{x}}) + \varepsilon \boldsymbol{\nabla} u(\tilde{\boldsymbol{x}}) \cdot \tilde{\boldsymbol{n}},                                          & \text{for} \quad  \tilde{\boldsymbol{x}} \in \tilde{\Gamma}_{h,R}. 
\end{align}
\end{subequations}

The main challenge now is how to represent $\tilde{u}_D$ in terms of $u_D$ and related quantities for Neumann and Robin boundary conditions. Simply equating those will result in a first-order-accurate scheme; something more elaborate is needed, as will be covered in the following.
\section{The shifted boundary polynomial correction and reconstruction for off-site data}\label{sec:sbm_rod}

Next, we introduce a new framework based on the so-called {\it polynomial correction} to develop general embedded boundary conditions for unfitted domains. The original idea \cite{ciallella2023shifted} was introduced for Dirichlet boundary conditions in problems on curved domains with affine elements (as a curvature approximation/correction). It simplifies the implementation of the shifted boundary method (SBM) \cite{main2018shifted,atallah2022high} and avoids the need to compute higher-order derivatives via truncated Taylor expansions. The polynomial correction was extended to general Neumann and Robin conditions on fully unstructured meshes in \cite{visbech2025spectral} for elliptic problems. Meanwhile, a different family of embedded boundary methods, called the reconstruction for off-site data (ROD) method \cite{costa2018very,costa2019very}, was developed. Unlike SBM, which uses Taylor expansions, ROD solves a minimization problem to compute modified high-order boundary conditions on the computational boundary. The ROD method allows for the direct incorporation of general Neumann and Robin boundary conditions by defining a constraint matrix. In this section, we present the two families of methods and compare their differences, before introducing a completely new, generalized, minimization-based framework for deriving polynomial corrections in Section \ref{sec:PolyCorrDir}.

\subsection{The Shifted boundary polynomial correction}\label{sec:SBcorrection}
This section briefly reviews the original polynomial correction for the SBM \cite{ciallella2023shifted} to establish notation and provide context for the new formulations derived in Section \ref{sec:PolyCorrDir}. The SBM, especially the high-order variant \cite{atallah2022high}, enforces boundary conditions at the computational boundary $\tilde{\Gamma}_h$ by applying a corrected Dirichlet value obtained via a truncated Taylor expansion at an appropriate order. Considering the pure Dirichlet case, $\Gamma = \Gamma_D$ and $\tilde{\Gamma}_h = \tilde{\Gamma}_{h,D}$, in 1D. Recall that $u_D$ is the essential boundary condition at $\bar{x} \in \Gamma$, which does not coincide with $\tilde x \in \tilde{\Gamma}_h$. A corrected boundary condition $\tilde{u}_D = \tilde{u}_D(\tilde x)$ for the computational surrogate/unfitted boundary can be constructed starting from the truncated $p$'th order Taylor expansion given as
\begin{equation}\label{eq:taylor}
    u(\bar x)=u(\tilde x + d) \approx u(\tilde x) + \sum_{m=1}^p \partial_x^{(m)} u(\tilde x) \frac{d^m}{m!}.
\end{equation}
With this, the high-order SBM \cite{atallah2022high} can be developed, including all high-order derivative terms, and by enforcing the prescribed boundary condition $u(\bar x) = u_D$ as
\begin{equation}
    \tilde{u}_D = \tilde{u}_D(\tilde x) = u_D - \sum_{m=1}^p \partial_x^{(m)} u_h(\tilde x) \frac{d^m}{m!},  
\end{equation}
with $u_h(x) = \boldsymbol{\phi}^T(x)\boldsymbol{u}$ being the internal solution of the boundary element, where $\boldsymbol{u}$ are the solution values and $\boldsymbol{\phi}$ is the vector of nodal basis functions as given in Section \ref{sec:SEM}.

As shown in \cite{ciallella2023shifted}, from \eqref{eq:taylor} it is direct to infer that the sum of derivative terms in the expression is equivalent to the difference of the polynomial approximation evaluated at the two boundary locations. Therefore, the shifted boundary Dirichlet boundary condition, $\tilde u_D$ on $\tilde{\Gamma}_h$ can be rewritten using the following correction
\begin{equation}\label{eq:SBcorrection}
\begin{split}
    \tilde{u}_D   &= u_h(\tilde x) - \alpha_D^{\text{SBM}}u_h(\bar x) + \alpha_D^{\text{SBM}}u_D \\
                  &= [\boldsymbol{\phi}(\tilde x) - \alpha_D^{\text{SBM}}\boldsymbol{\phi}(\bar x)]^T\, \boldsymbol{u} + \alpha_D^{\text{SBM}}u_D,
\end{split}
\end{equation}
where $\alpha_D^{\text{SBM}} = 1$ and the use of this notation will be evident in the following parts.

It should be noticed that \eqref{eq:SBcorrection} is arbitrarily high-order accurate depending on the set of chosen basis functions. Moreover, it holds in multiple dimensions, since the Dirichlet condition imposed at a quadrature point is not influenced by the boundary condition at the next point. Therefore, in multiple dimensions, here 2D, the correction \eqref{eq:SBcorrection} simply becomes
\begin{equation}\label{eq:SBcorrection2D}
\tilde{u}_D = \tilde{u}_D(\tilde{\boldsymbol{x}}) = [\boldsymbol{\phi}(\tilde{\boldsymbol{x}})-\alpha_D^{\text{SBM}}\boldsymbol{\phi}(\bar{\boldsymbol{x}})]^T\, \boldsymbol{u} + \alpha_D^{\text{SBM}}u_D, 
\end{equation}
where $\boldsymbol{x} = (x, y)$ and $\boldsymbol{\phi}$ are the two-dimensional high-order nodal basis functions from Section \ref{sec:SEM}.

Switching the focus to the Neumann boundary condition, such that $\Gamma = \Gamma_N$ and $\tilde{\Gamma}_h = \tilde{\Gamma}_{h,N}$. To develop the shifted boundary polynomial correction in the Neumann case, we consider the reasoning done in \cite{visbech2025spectral}. Recall that $\boldsymbol{n}$ is the outward-facing normal on the true boundary, $\Gamma_N$, with an associated tangent, $\boldsymbol{t}=(t_x,t_y)$, whereas $\tilde{\boldsymbol{n}}$ is the outward-facing normal on the unfitted surrogate computational boundary, $\tilde{\Gamma}_{h,N}$. Following the same notation as above, we define $q_N$ as the exact Neumann boundary condition on the physical boundary and $\tilde q_N$ as the modified condition on the computational one. To transfer the condition imposed, we introduce the following normal decomposition
\begin{equation}\label{eq:normaldec}
    \tilde{\boldsymbol{n}} = (\tilde{\boldsymbol{n}} \cdot \boldsymbol{n}) \boldsymbol{n} + (\tilde{\boldsymbol{n}}\cdot \boldsymbol{t}) \boldsymbol{t}.
\end{equation}
A modified Neumann boundary condition on the computational boundary is obtained by using the decomposition on $\tilde q_N(\tilde{\boldsymbol{x}}) =   \tilde{\boldsymbol{n}} \cdot\boldsymbol{\nabla} u(\tilde{\boldsymbol{x}}) $ yielding
\begin{equation}\label{eq:neumanndec}
\tilde q_N = \tilde q_N (\tilde{\boldsymbol{x}}) = (\tilde{\boldsymbol{n}}\cdot \boldsymbol{n}) \boldsymbol{n}\cdot \nabla u(\tilde{\boldsymbol{x}}) + (\tilde{\boldsymbol{n}}\cdot \boldsymbol{t}) \boldsymbol{t}\cdot \nabla u_h(\tilde{\boldsymbol{x}}),
\end{equation}
where the first term, $\boldsymbol{n}\cdot \nabla u(\tilde{\boldsymbol{x}})$, is modified by using the polynomial correction applied to the derivatives as
\begin{equation}\label{eq:SBDerPolyCorr}
    \boldsymbol{n}\cdot \nabla u(\tilde{\boldsymbol{x}}) =  \boldsymbol{n}\cdot \nabla u_h(\tilde{\boldsymbol{x}}) - \boldsymbol{n}\cdot \nabla u_h(\bar{\boldsymbol{x}})  + q_N.
\end{equation}
Assembling \eqref{eq:neumanndec} and \eqref{eq:SBDerPolyCorr} and after some simple manipulations, the condition can be written as a polynomial correction based on the normal derivatives as
\begin{equation}\label{eq:SBNeumannPolyCorr} 
    \begin{split}
\tilde q_N = \tilde q_N (\tilde{\boldsymbol{x}}) &= \tilde{\boldsymbol{n}}\cdot \nabla u_h(\tilde{\boldsymbol{x}}) - \alpha_N^{\text{SBM}}\boldsymbol{n}\cdot \nabla u_h(\bar{\boldsymbol{x}}) + \alpha_N^{\text{SBM}} q_N\\
&=[\tilde{\boldsymbol{n}}\cdot \nabla \boldsymbol{\phi}(\tilde{\boldsymbol{x}}) - \alpha_N^{\text{SBM}}\boldsymbol{n}\cdot \nabla \boldsymbol{\phi}(\bar{\boldsymbol{x}})]^T \boldsymbol{u} + \alpha_N^{\text{SBM}} q_N,
    \end{split}
\end{equation} 
where $\alpha_N^{\text{SBM}} = \tilde{\boldsymbol{n}}\cdot\boldsymbol{n}$. This polynomial correction has the same shape as the Dirichlet one \eqref{eq:SBcorrection2D}, but rather than focusing on the basis functions, it focuses on the normal derivatives, and it has a multiplying scalar applied to the true boundary condition and to the evaluation of the basis functions in the true boundary.

\subsection{Reconstruction for off-site data}\label{sec:ROD}

Before presenting the new family of methods inspired by the ROD approach, we recall its original formulation in the context of finite element methods. In multiple dimensions, the standard ROD method as implemented in \cite{santos2024very,ciallella2024very} works as follows. Within each boundary element, a minimization problem is solved to obtain a single modified polynomial $\tilde u_D(\boldsymbol{x})=\boldsymbol{\phi}^T(\boldsymbol{x})\boldsymbol{v}$, which is as close as possible to the internal one $u_h(\boldsymbol{x})=\boldsymbol{\phi}^T(\boldsymbol{x})\boldsymbol{u}$ from Section \ref{sec:SEM}. In general, the minimization introduces as many constraints as the number of quadrature points used for the boundary integral on that element. These constraints enforce the correct boundary conditions at the corresponding points on the true boundary. Moreover, as shown in \cite{santos2024very}, increasing the number of constraints when increasing the polynomial degree is crucial to maintain the consistency of the scheme, equivalent to increasing the order of the truncated Taylor expansion in the context of the high-order SBM \cite{atallah2022high}. The constrained minimization problem is easily formulated by minimizing the distance between the polynomials using the Euclidean norm and enforcing the boundary conditions via Lagrange multipliers. When considering standard Dirichlet conditions, the functional reads
\begin{equation}\label{eq:rod2dfunctional}
\mathcal{L}(\boldsymbol{v}, \boldsymbol{\lambda}) = \frac12 \|\boldsymbol{v}-\boldsymbol{u}\|^2_2 + \sum_{k=1}^K \lambda_k \left( \tilde u_D(\bar{\boldsymbol{x}}_k) - u_D(\bar{\boldsymbol{x}}_k) \right),
\end{equation}
where $\boldsymbol{\lambda} = [\lambda_1, \ldots, \lambda_K]^T$ is the vector of Lagrange multipliers with $K=1$ in 1D and $K=p+1$ in 2D. The optimality conditions give
\begin{equation}
\frac{\partial \mathcal{L}}{\partial \boldsymbol{v}} = \boldsymbol{v} - \boldsymbol{u} + \Phi(\bar{\boldsymbol{x}}) \boldsymbol{\lambda}  = 0, \qquad \frac{\partial \mathcal{L}}{\partial \boldsymbol{\lambda}} = \Phi^T(\bar{\boldsymbol{x}})\boldsymbol{v}  - \boldsymbol{u}_D = 0
\end{equation}
where $[\Phi(\bar{\boldsymbol{x}})]_{j k} = \phi_j(\bar{\boldsymbol{x}}_k)$, and $\Phi(\bar{\boldsymbol{x}})$ is assumed to have full column rank. Moreover, we have defined $\boldsymbol{u}_D=[u_D(\bar{\boldsymbol{x}}_1),\ldots,u_D(\bar{\boldsymbol{x}}_K)]$ the set of boundary conditions used for the minimization problem.
For a Dirichlet boundary condition, the ROD method boils down to finding the polynomial coefficients of $\tilde u_D$ coming from the solution of the following linear system as
\begin{equation}\label{eq:RODE2Dsystem}
\begin{bmatrix} I & \Phi(\bar{\boldsymbol{x}}) \\ \Phi^T(\bar{\boldsymbol{x}}) & 0 \end{bmatrix} \begin{bmatrix}\boldsymbol{v} \\ \boldsymbol{\lambda} \end{bmatrix} = \begin{bmatrix} \boldsymbol{u} \\ \boldsymbol{u}_D \end{bmatrix}.
\end{equation}
This system can be solved using the Schur complement by
\begin{equation}
    \boldsymbol{v} = \boldsymbol{u} - \Phi(\bar{\boldsymbol{x}}) \boldsymbol{\lambda}, \quad \text{with} \quad  \boldsymbol{\lambda} = (\Phi^T(\bar{\boldsymbol{x}})\Phi(\bar{\boldsymbol{x}}))^{-1}(\Phi^T(\bar{\boldsymbol{x}})\boldsymbol{u}  - \boldsymbol{u}_D).
\end{equation}
The coefficients $\boldsymbol{v}$ can be used to compute the ROD Dirichlet condition by evaluating $\tilde u_D(\tilde{\boldsymbol{x}}_k)$ for $k=1,\ldots,K$.

It is straightforward to extend this approach to Neumann conditions since the only modification to be done is in the definition of the linear constraint of \eqref{eq:rod2dfunctional}. In this case, we are building a polynomial $\tilde u_N(\boldsymbol{x})=\boldsymbol{\phi}^T(\boldsymbol{x})\boldsymbol{v}$ such that the following functional is minimized
\begin{equation}\label{eq:rod2dfunctionalNeumann}
\mathcal{L}(\boldsymbol{v}, \boldsymbol{\lambda}) = \frac12 \|\boldsymbol{v}-\boldsymbol{u}\|^2_2 + \sum_{k=1}^K \lambda_k \left( \boldsymbol{n}(\bar{\boldsymbol{x}}_k) \cdot \nabla \tilde u_N(\bar{\boldsymbol{x}}_k) - q_N(\bar{\boldsymbol{x}}_k) \right),
\end{equation}
and the matrix $\Phi$ would take the shape $ [\Phi(\bar{\boldsymbol{x}})]_{j k} = \boldsymbol{n}(\bar{\boldsymbol{x}}_k) \cdot \nabla \phi_j(\bar{\boldsymbol{x}}_k)$, where $\boldsymbol{n}(\bar{\boldsymbol{x}}_k)$ is the local normal vector to the true boundary.
The vector $\boldsymbol{u}_N$ in the right-hand side is then filled by the vector $\boldsymbol{q}_N=[q_N(\bar{\boldsymbol{x}}_1),\ldots,q_N(\bar{\boldsymbol{x}}_K)]$.
Once the coefficients $\boldsymbol{v}$ are computed, the ROD Neumann boundary condition is $\tilde q_N(\tilde{\boldsymbol{x}}_k)=\tilde{\boldsymbol{n}}\cdot\nabla \tilde u_N(\tilde{\boldsymbol{x}}_k)$ for $k=1,\ldots,K$. Being $\tilde{\boldsymbol{n}}$ the normal to the surrogate boundary, which has no curvature, its value will not change in the same boundary element. 

It can be observed that the standard ROD method always requires a matrix inversion for each boundary element. This is because, when constructing a single polynomial per element, if constraints are not increased, the method loses consistency. Here, we demonstrate how to overcome this issue and establish different minimization problems that can be reformulated as simple polynomial corrections. By doing so, the new minimization-based approach can be applied directly to each boundary point while maintaining high-order consistency and avoiding matrix inversions.

\section{Minimization-based polynomial corrections}\label{sec:PolyCorrDir}

In the following, we introduce a family of polynomial corrections designed to achieve arbitrary-order accuracy on multi-dimensional embedded domains. Like ROD, these corrections rely on solving a minimization problem, which avoids the costly inversion of the constraint matrix. Building on the 1D analysis of \cite{ciallella2025minimization}, we extend the minimization-based polynomial corrections to multiple dimensions. These corrections can be applied point-wise at each boundary quadrature point, as in \eqref{eq:SBcorrection2D}, while still maintaining consistency. The main idea is to construct the corrected polynomial at a computational boundary point using a local 1D reference system aligned with the chosen mapping in \eqref{eq:mapping} that connects that point to the corresponding true boundary point. When only a single constraint (the boundary condition) is enforced, the resulting polynomial from the minimization is not sufficient to guarantee high-order accuracy for general problems \cite{santos2024very}. However, this simplified system can be solved explicitly, resulting in a simple polynomial correction that is applied independently at each boundary point. Therefore, instead of creating one polynomial per element, we generate as many polynomials as there are boundary quadrature points. We first present this formulation for Dirichlet conditions in 1D, then demonstrate its direct extension to higher dimensions. The extension is straightforward because the correction acts locally along the mapping direction at each boundary point, effectively reducing the multidimensional problem to a set of independent 1D corrections. This novel and general framework enables the systematic derivation of new polynomial corrections based on different cost functions, providing a family of new multi-dimensional numerical boundary conditions for embedded domains. Finally, we show that the same minimization approach naturally extends to Neumann and Robin conditions, thereby generalizing polynomial correction methods to a wide range of boundary conditions.

\subsection{The Dirichlet case}

Although the hypotheses used for the development of this new set of methods are different, we show here that all of them can be written in the same polynomial correction. Following the same notation as above, the modified boundary condition, $\tilde u_D$, in the surrogate boundary point, $\tilde{\boldsymbol{x}}$, considering the true boundary condition, $u_D$, on the true boundary point, $\bar{\boldsymbol{x}}$, reads
\begin{equation}\label{eq:generalPC}
    \tilde u_D = [\boldsymbol{\phi}(\tilde{\boldsymbol{x}}) - \alpha_D \boldsymbol{\phi}(\bar{\boldsymbol{x}})]^T \boldsymbol{u} + \alpha_D u_D, 
\end{equation}
where $\alpha_D$ is a scalar coefficient and the only variable that varies across methods. When $\alpha_D = 0$, the surrogate boundary condition reduces to the unconstrained polynomial evaluation $\tilde u_D = \boldsymbol{\phi}(\tilde{\boldsymbol{x}})^T \boldsymbol{u}$, while $\alpha_D = 1$ enforces the shifted boundary polynomial corrections in \eqref{eq:SBcorrection2D}.

\subsubsection{ROD-E polynomial correction}

The concept of implementing minimization-based polynomial corrections for embedded computations originates from the analysis of Dirichlet boundary conditions in 1D presented in \cite{ciallella2025minimization}, which will be recalled here for completeness and then extended to higher dimensions and to general Neumann and Robin boundary conditions.

Considering a 1D computational domain, the ROD minimization problem becomes significantly simpler because 1D elements only have $K=1$ boundary constraints to impose. In this setup, the true boundary point is $\bar x$ and the computational boundary point is $\tilde x$. Therefore, applying the ROD approach here involves constructing the polynomial $\tilde u_D(x)=\boldsymbol{\phi}^T(x)\boldsymbol{v}$, by minimizing the Euclidean distance to the internal polynomial $u_h(x)=\boldsymbol{\phi}^T(x)\boldsymbol{u}$, subject to the boundary constraint $\tilde u_D(\bar x) = u_D$. In this case, the functional is given by
\begin{equation}
    \mathcal{L}(\boldsymbol{v} , \lambda) = \frac12\|\boldsymbol{v}-\boldsymbol{u}\|^2_2 + \lambda ( \tilde u_D(\bar x) - u_D ),
\end{equation}
and the corresponding optimality conditions are
\begin{equation}\label{eq:optcondRODE}
    \frac{\partial \mathcal{L}}{\partial \boldsymbol{v}} = \boldsymbol{v} - \boldsymbol{u} + \lambda  \boldsymbol{\phi}(\bar x) = \boldsymbol{0} , \qquad \frac{\partial \mathcal{L}}{\partial \lambda} = \boldsymbol{\phi}^T(\bar x) \boldsymbol{v} - u_D = 0.
\end{equation}
This system can be written in the following matrix form
\begin{equation}\label{eq:RODsystem}
\begin{bmatrix} I & \boldsymbol{\phi}(\bar x) \\ \boldsymbol{\phi}^T(\bar x) & 0 \end{bmatrix} \begin{bmatrix}\boldsymbol{v} \\ \lambda \end{bmatrix} = \begin{bmatrix} \boldsymbol{u} \\ u_D \end{bmatrix}.
\end{equation}
By solving this system, the polynomial coefficients $\boldsymbol{v}$ are obtained, and the modified condition $\tilde{u}_D(\tilde x)=\boldsymbol{\phi}^T(\tilde x)\boldsymbol{v}$ can be directly computed.

We call this approach the ROD-E polynomial correction because the minimization problem is formulated in terms of the Euclidean norm. Although ROD-E can be seen as a reformulation of ROD in 1D, the two methods differ in multiple dimensions. Therefore, we give them distinct names from the very beginning.

\begin{proposition}
Computing the modified boundary condition by inverting the linear system \eqref{eq:RODsystem} and evaluating the polynomial $\tilde u_D$ in $\tilde x$ is equivalent to the following polynomial correction
\begin{equation}\label{eq:rodE_final}
    \tilde{u}_D = [\boldsymbol{\phi}(\tilde x) - \alpha^{\text{ROD-E}}_D \boldsymbol{\phi}(\bar x)]^T \boldsymbol{u} + \alpha^{\text{ROD-E}}_D u_D , \quad \quad \text{with}
\end{equation}
\begin{equation}\label{eq:alpha_rod_e}
    \alpha^{\text{ROD-E}}_D = \frac{\boldsymbol{\phi}(\tilde{x})^T \boldsymbol{\phi}(\bar{x})}{\|\boldsymbol{\phi}(\bar{x})\|^2_2}.   
\end{equation}
\end{proposition}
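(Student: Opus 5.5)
We solve the saddle-point system \eqref{eq:RODsystem} explicitly by eliminating the primal unknown, exactly as in the Schur-complement solution of the 2D system \eqref{eq:RODE2Dsystem}, now with a single constraint. The first optimality condition in \eqref{eq:optcondRODE} gives
\begin{equation*}
\boldsymbol{v} = \boldsymbol{u} - \lambda\, \boldsymbol{\phi}(\bar x).
\end{equation*}
Substituting this into the constraint $\boldsymbol{\phi}^T(\bar x)\boldsymbol{v} = u_D$ yields a scalar equation for $\lambda$:
\begin{equation*}
\boldsymbol{\phi}^T(\bar x)\boldsymbol{u} - \lambda \|\boldsymbol{\phi}(\bar x)\|_2^2 = u_D.
\end{equation*}
Hence
\begin{equation*}
\lambda = \frac{\boldsymbol{\phi}^T(\bar x)\boldsymbol{u} - u_D}{\|\boldsymbol{\phi}(\bar x)\|_2^2}.
\end{equation*}
The Schur complement here is the $1\times 1$ quantity $\|\boldsymbol{\phi}(\bar x)\|_2^2$.

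Next we evaluate the reconstructed polynomial at the surrogate point:
\begin{equation*}
\tilde u_D(\tilde x) = \boldsymbol{\phi}^T(\tilde x)\boldsymbol{v} = \boldsymbol{\phi}^T(\tilde x)\boldsymbol{u} - \lambda\, \boldsymbol{\phi}^T(\tilde x)\boldsymbol{\phi}(\bar x).
\end{equation*}
Inserting $\lambda$ and collecting the scalar $\alpha := \boldsymbol{\phi}(\tilde x)^T\boldsymbol{\phi}(\bar x)/\|\boldsymbol{\phi}(\bar x)\|_2^2$ gives
\begin{equation*}
\tilde u_D = \boldsymbol{\phi}^T(\tilde x)\boldsymbol{u} - \alpha\,\boldsymbol{\phi}^T(\bar x)\boldsymbol{u} + \alpha\, u_D.
\end{equation*}
This is precisely \eqref{eq:rodE_final} with $\alpha = \alpha^{\text{ROD-E}}_D$ as in \eqref{eq:alpha_rod_e}, and it fits the general form \eqref{eq:generalPC}.

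The only point needing care, which we regard as the main (mild) obstacle, is that both the solvability of \eqref{eq:RODsystem} and the division above require $\|\boldsymbol{\phi}(\bar x)\|_2 \neq 0$. For a nodal Lagrange basis this always holds. The basis functions sum to one at every point (partition of unity, since constants lie in $\mathbb{P}^p$), so $\boldsymbol{\phi}(\bar x)$ cannot be the zero vector, even when $\bar x$ lies outside the element and the basis is being extrapolated.

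With this fact the saddle-point matrix is nonsingular: its Schur complement $-\|\boldsymbol{\phi}(\bar x)\|_2^2$ is nonzero and the $(1,1)$ block is the identity. The solution is therefore unique, and "inverting the linear system" and the closed-form correction produce the same $\boldsymbol{v}$, hence the same value $\tilde u_D(\tilde x)$. This establishes the claimed equivalence.
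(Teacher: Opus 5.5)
Your proposal is correct and follows the paper's proof essentially line by line: you solve the first optimality condition for $\boldsymbol{v}$, substitute into the constraint to obtain $\lambda$, and evaluate $\boldsymbol{\phi}(\tilde x)^T\boldsymbol{v}$. Your extra remark that $\boldsymbol{\phi}(\bar x)\neq\boldsymbol{0}$ by the partition of unity, so that the saddle-point system is uniquely solvable even with extrapolated basis functions, is a valid justification that the paper leaves implicit.
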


\begin{proof}
The solution of this minimization problem can be found starting from the first optimality condition \eqref{eq:optcondRODE} as
\begin{equation}
   \boldsymbol{v} = \boldsymbol{u} - \lambda  \boldsymbol{\phi}(\bar x), \qquad \boldsymbol{\phi}^T(\bar x) (\boldsymbol{u} - \lambda  \boldsymbol{\phi}(\bar x)) - u_D = 0.
\end{equation}
The Lagrange multiplier $\lambda$ is obtained and the modified degrees of freedom $\boldsymbol{v}$ reads
\begin{equation}
\boldsymbol{v} = \boldsymbol{u} - \left( \frac{\boldsymbol{\phi}(\bar{x})^T \boldsymbol{u} - u_D}{\boldsymbol{\phi}(\bar{x})^T \boldsymbol{\phi}(\bar{x})} \right) \boldsymbol{\phi}(\bar{x}).
\end{equation}

In order for this approach to be a polynomial correction, it is necessary to consider the evaluation of the $\tilde u_D$ in the computational boundary point $\tilde x$, which gives
\begin{equation}
    \tilde u_D = \boldsymbol{\phi}(\tilde{x})^T \boldsymbol{v} = \boldsymbol{\phi}(\tilde{x})^T \boldsymbol{u} - \left( \frac{\boldsymbol{\phi}(\bar{x})^T \boldsymbol{u} - u_D}{\|\boldsymbol{\phi}(\bar{x})\|^2_2} \right) \boldsymbol{\phi}(\tilde{x})^T \boldsymbol{\phi}(\bar{x}).
\end{equation}
It is now easy to infer that the ROD-E modified condition can be recast as the polynomial correction \eqref{eq:rodE_final}.
\end{proof}

Although the minimization problem has been written under the condition of a 1D computational domain, in this section, we present the possibility of developing them in multiple space dimensions, in the spirit of the shifted boundary correction \eqref{eq:SBcorrection2D}. The key hypothesis is that the embedded boundary conditions are formulated in a 1D reference space, along the chosen mapping, $\mathcal{M}$, from \eqref{eq:mapping} that aligns the point on the computational boundary, $\tilde{\boldsymbol{x}}$, with the corresponding point on the true boundary, $\bar{\boldsymbol{x}}$. Therefore, in multiple dimensions, the ROD-E polynomial correction would simply read
\begin{equation}\label{eq:rodE_final2D}
    \tilde{u}_D = [\boldsymbol{\phi}(\tilde{\boldsymbol{x}}) - \alpha^{\text{ROD-E}}_D \boldsymbol{\phi}(\bar{\boldsymbol{x}})]^T \boldsymbol{u} + \alpha^{\text{ROD-E}}_D u_D , \quad \quad \text{with}
\end{equation}
\begin{equation}
    \alpha^{\text{ROD-E}}_D = \frac{\boldsymbol{\phi}(\tilde{\boldsymbol{x}})^T \boldsymbol{\phi}(\bar{\boldsymbol{x}})}{\|\boldsymbol{\phi}(\bar{\boldsymbol{x}})\|^2_2}.   
\end{equation}
To simplify the notation, we have defined the modified boundary condition at the surrogate boundary $\tilde{\boldsymbol{x}}$ as $\tilde{u}_D := \tilde{u}_D(\tilde{\boldsymbol{x}})$. When referring to the evaluation of the polynomial at any other point $\boldsymbol{x}$, we explicitly use the notation $\tilde{u}_D(\boldsymbol{x})$.

We emphasize that the SBM polynomial correction in \eqref{eq:SBcorrection2D} and the ROD-E one in \eqref{eq:rodE_final2D} differ only in the value of the scalar $\alpha_D$, which is equal to $1$ in the SBM case. Therefore, in the original ROD approach, a single modified polynomial is used for the entire boundary element to impose all boundary constraints. In contrast, what we call the ROD-E method here consists of applying the polynomial correction \eqref{eq:rodE_final2D} at each boundary point, which requires no inversion. Hence, the ROD-E polynomial correction effectively builds multiple modified polynomials within each boundary element, each one adapted to consistently enforce the boundary condition at a specific point on the true boundary. The generality of this approach makes it possible to develop different methods by simply changing the functional of the minimization problem. Moreover, we will show below that all the following methods can always be recast as a simple polynomial correction with a different scalar factor $\alpha_D$.

\subsubsection{ROD-\texorpdfstring{$L^2$}{L2} polynomial correction}

In \cite{ciallella2025minimization}, it was proposed to develop a new polynomial correction by changing the distance function in the minimization problem. This led to a method with different stability properties with respect to the ROD-E approach. The ROD-$L^2$ method reconstructs a modified polynomial $\tilde u_D(\boldsymbol{x})=\boldsymbol{\phi}^T (\boldsymbol{x})\boldsymbol{v}$ that minimizes the $L^2$ distance to the internal polynomial $u_h$, subject to the Dirichlet condition $u_D$ on the true boundary $\bar{\boldsymbol{x}}$. The new functional reads
\begin{equation}
    \mathcal{L}(\boldsymbol{v}, \lambda) = \frac{1}{2} \|\tilde u_D - u_h\|^2_{L^2(\tilde \Omega_h)} + \lambda(\tilde u_D(\bar{\boldsymbol{x}}) - u_D),
\end{equation}
where $\tilde \Omega_h$ represents the considered 2D boundary element. Substituting the polynomial expansions, the mass matrix appears in the first optimality condition, and the problem can be rewritten in the following matrix form
\begin{equation}
\frac{\partial \mathcal{L}}{\partial \boldsymbol{v}} = M(\boldsymbol{v} - \boldsymbol{u}) + \lambda \boldsymbol{\phi}(\bar{\boldsymbol{x}}) = \boldsymbol{0},  \qquad \frac{\partial \mathcal{L}}{\partial \lambda} = \boldsymbol{\phi}(\bar{\boldsymbol{x}})^T \boldsymbol{v} - u_D = 0 .
\end{equation}
Again, by solving the following linear system, 
\begin{equation}\label{eq:RODL2system}
\begin{bmatrix} M & \boldsymbol{\phi}(\bar{\boldsymbol{x}}) \\ \boldsymbol{\phi}^T(\bar{\boldsymbol{x}}) & 0 \end{bmatrix} \begin{bmatrix}\boldsymbol{v} \\ \lambda \end{bmatrix} = \begin{bmatrix} M \boldsymbol{u} \\ u_D \end{bmatrix}, 
\end{equation}
the coefficients $\boldsymbol{v}$ are obtained, and the evaluated polynomial $\tilde u_D(\tilde{\boldsymbol{x}})$ can be computed.

\begin{proposition}
Computing the ROD-$L^2$ modified Dirichlet boundary condition by inverting the linear system \eqref{eq:RODL2system} and evaluating the polynomial $\tilde u_D$ in $\tilde{\boldsymbol{x}}$ is equivalent to the following polynomial correction
\begin{equation}\label{eq:rodL2_final}
    \tilde u_D = [\boldsymbol{\phi}(\tilde{\boldsymbol{x}}) - \alpha_D^{\text{ROD-$L^2$}} \boldsymbol{\phi}(\bar{\boldsymbol{x}})]^T \boldsymbol{u} + \alpha_D^{\text{ROD-$L^2$}} u_D , \quad \quad \text{with}
\end{equation}
\begin{equation}\label{eq:alpha_rod_L2}
    \alpha_D^{\text{ROD-$L^2$}} = \frac{\boldsymbol{\phi}(\tilde{\boldsymbol{x}})^T M^{-1} \boldsymbol{\phi}(\bar{\boldsymbol{x}})}{\|\boldsymbol{\phi}(\bar{\boldsymbol{x}})\|^2_{M^{-1}}}.
\end{equation}

\end{proposition}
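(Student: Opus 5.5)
The plan mirrors the proof of the ROD-E proposition, replacing the identity by the mass matrix $M$ and performing a Schur-complement elimination.

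First, I use that $M$ is the element mass matrix of a nodal basis on the boundary element, hence symmetric positive definite. It is therefore invertible, and $\boldsymbol{w}^T M^{-1}\boldsymbol{w} = \|\boldsymbol{w}\|^2_{M^{-1}}$ defines a norm. Solving the first optimality condition for $\boldsymbol{v}$ gives
\begin{equation*}
\boldsymbol{v} = \boldsymbol{u} - \lambda M^{-1}\boldsymbol{\phi}(\bar{\boldsymbol{x}}).
\end{equation*}

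Second, I substitute this into the constraint $\boldsymbol{\phi}(\bar{\boldsymbol{x}})^T\boldsymbol{v} = u_D$. This yields the scalar equation
\begin{equation*}
\boldsymbol{\phi}(\bar{\boldsymbol{x}})^T\boldsymbol{u} - \lambda\, \boldsymbol{\phi}(\bar{\boldsymbol{x}})^T M^{-1}\boldsymbol{\phi}(\bar{\boldsymbol{x}}) - u_D = 0,
\end{equation*}
which can be solved for $\lambda$ provided $\|\boldsymbol{\phi}(\bar{\boldsymbol{x}})\|^2_{M^{-1}} \neq 0$. Positive definiteness of $M^{-1}$ reduces this to $\boldsymbol{\phi}(\bar{\boldsymbol{x}}) \neq \boldsymbol{0}$, which holds because the Lagrange basis forms a partition of unity, $\sum_i \phi_i \equiv 1$, also at off-element points. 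This solvability check is the only non-routine point. It also shows that the saddle-point matrix in \eqref{eq:RODL2system} is nonsingular, so the solution obtained is the unique one.

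Third, I evaluate $\tilde u_D(\tilde{\boldsymbol{x}}) = \boldsymbol{\phi}(\tilde{\boldsymbol{x}})^T\boldsymbol{v}$:
\begin{equation*}
\tilde u_D = \boldsymbol{\phi}(\tilde{\boldsymbol{x}})^T\boldsymbol{u} - \frac{\boldsymbol{\phi}(\bar{\boldsymbol{x}})^T\boldsymbol{u} - u_D}{\|\boldsymbol{\phi}(\bar{\boldsymbol{x}})\|^2_{M^{-1}}}\,\boldsymbol{\phi}(\tilde{\boldsymbol{x}})^T M^{-1}\boldsymbol{\phi}(\bar{\boldsymbol{x}}).
\end{equation*}
Collecting the coefficient of $\boldsymbol{\phi}(\bar{\boldsymbol{x}})^T\boldsymbol{u}$ and of $u_D$ identifies $\alpha_D^{\text{ROD-$L^2$}}$ exactly as in \eqref{eq:alpha_rod_L2}, and the expression takes the form \eqref{eq:rodL2_final}.

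Finally, I note that setting $M = I$ recovers the ROD-E proposition, which serves as a consistency check on the computation.
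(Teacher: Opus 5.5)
Your proposal is correct and follows essentially the same route as the paper: you eliminate $\boldsymbol{v}$ from the first optimality condition, solve the scalar constraint for $\lambda$, and evaluate $\boldsymbol{\phi}(\tilde{\boldsymbol{x}})^T\boldsymbol{v}$ to read off $\alpha_D^{\text{ROD-$L^2$}}$. Your additional well-posedness argument is valid and is something the paper leaves implicit: $M$ is symmetric positive definite, and $\boldsymbol{\phi}(\bar{\boldsymbol{x}})\neq\boldsymbol{0}$ because the Lagrange basis sums to one, so the saddle-point system is nonsingular.
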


\begin{proof}

The optimality conditions provides following relations for $\boldsymbol{v}$ and $\lambda$
\[
\boldsymbol{v} = \boldsymbol{u} - \lambda M^{-1} \boldsymbol{\phi}(\bar{\boldsymbol{x}}),\qquad \lambda = \frac{\boldsymbol{\phi}(\bar{\boldsymbol{x}})^T \boldsymbol{u} - u_D(\bar{\boldsymbol{x}})}{\boldsymbol{\phi}(\bar{\boldsymbol{x}})^T M^{-1} \boldsymbol{\phi}(\bar{\boldsymbol{x}})}.
\]
As it was done above, in order to write the ROD-$L^2$ as a polynomial correction, it is necessary to evaluate $\tilde u_D$ at the computational boundary $\tilde{\boldsymbol{x}}$
\begin{equation}\label{eq:rodL2poly}
\begin{split}
    \tilde u_D = \boldsymbol{\phi}(\tilde{\boldsymbol{x}})^T \boldsymbol{v} = \boldsymbol{\phi}(\tilde{\boldsymbol{x}})^T \boldsymbol{u} - \left( \frac{\boldsymbol{\phi}(\bar{\boldsymbol{x}})^T \boldsymbol{u} - u_D(\bar{\boldsymbol{x}})}{\|\boldsymbol{\phi}(\bar{\boldsymbol{x}})\|^2_{M^{-1}}} \right) \boldsymbol{\phi}(\tilde{\boldsymbol{x}})^T M^{-1} \boldsymbol{\phi}(\bar{\boldsymbol{x}}).
\end{split}
\end{equation}
From equation \eqref{eq:rodL2poly}, it is direct to recast the ROD-$L^2$ method as the polynomial correction \eqref{eq:rodL2_final}. 

\end{proof}

\subsubsection{ROD-E-w polynomial correction}

In this section, we propose developing a variant of the ROD-E and ROD-$L^2$ methods, as opposed to the embedded approaches developed so far, which impose the boundary condition as a strict constraint. The main idea is to formulate a cost function that incorporates the boundary condition as a weak constraint instead of using a Lagrange multiplier. Although less common, we demonstrate here that this approach can also achieve arbitrarily high-order accuracy and can be represented as a polynomial correction. We note that in this work, we focus on elliptic PDEs solved with continuous Galerkin methods; however, these approaches could be directly applied to other computational frameworks and PDEs, potentially leading to different results. To illustrate a possible implementation, we consider the Euclidean norm for the polynomial coefficients. Consequently, in this case, the functional simply reads
\begin{equation}
     \mathcal{L}(\boldsymbol{v}) = \frac12 \|\boldsymbol{v} - \boldsymbol{u}\|^2_2 +  \frac12(\tilde u_D(\bar{\boldsymbol{x}}) - u_D)^2 = \frac12 \sum_{j=0}^p (v_j - u_j)^2 + \frac12\left( \sum_{j=0}^p v_j \phi_j(\bar{\boldsymbol{x}}) - u_D \right)^2.
\end{equation}

By taking the derivative of $\mathcal{L}$ with respect to $v_k$, the following optimality condition is obtained
\begin{equation}
    \frac{\partial \mathcal{L}}{\partial v_k} = (v_k - u_k) + \left( \sum_{j=0}^p v_j \phi_j(\bar{\boldsymbol{x}}) - u_D \right) \phi_k(\bar{\boldsymbol{x}}) = 0, \quad \text{for} \quad k = 0,\dots,p,
\end{equation}  
which can be written in the following matrix form
\begin{equation}\label{eq:rodEwsystem}
    \frac{\partial \mathcal{L}}{\partial \boldsymbol{v}} = \boldsymbol{v} - \boldsymbol{u} + (\boldsymbol{\phi}^T(\bar{\boldsymbol{x}}) \boldsymbol{v} - u_D) \boldsymbol{\phi}(\bar{\boldsymbol{x}}) = \boldsymbol{0}.
\end{equation}
The coefficients $\boldsymbol{v}$ are obtained by solving the linear system \eqref{eq:rodEwsystem}, and the polynomial $\tilde u_D$ in $\tilde{\boldsymbol{x}}$ can be computed. Although apparently different from previous formulations involving Lagrange multipliers, this boundary treatment can also be recast as a polynomial correction, as shown in the following result. 

\begin{proposition}
Computing the ROD-E-w modified Dirichlet boundary condition by inverting the linear system \eqref{eq:rodEwsystem} and evaluating the polynomial $\tilde u_D$ in $\tilde{\boldsymbol{x}}$ is equivalent to the following polynomial correction
\begin{equation}\label{eq:rodEw_final}
    \tilde u_D = [\boldsymbol{\phi}(\tilde{\boldsymbol{x}}) - \alpha_D^{\text{ROD-E-w}} \boldsymbol{\phi}(\bar{\boldsymbol{x}})]^T \boldsymbol{u} + \alpha_D^{\text{ROD-E-w}} u_D(\bar{\boldsymbol{x}}) , \quad \quad \text{with}
\end{equation}
\begin{equation}\label{eq:alpha_rod_e_w}
    \alpha_D^{\text{ROD-E-w}} = \frac{\boldsymbol{\phi}(\tilde{\boldsymbol{x}})^T \boldsymbol{\phi}(\bar{\boldsymbol{x}})}{1+\|\boldsymbol{\phi}(\bar{\boldsymbol{x}})\|^2_2}   .
\end{equation}
\end{proposition}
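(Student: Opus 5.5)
The plan is to solve the linear system \eqref{eq:rodEwsystem} in closed form. Its matrix is a rank-one perturbation of the identity, so Sherman--Morrison applies. Writing $\bar{\boldsymbol{\phi}} := \boldsymbol{\phi}(\bar{\boldsymbol{x}})$ and $\tilde{\boldsymbol{\phi}} := \boldsymbol{\phi}(\tilde{\boldsymbol{x}})$, the optimality condition reads
\begin{equation*}
(I + \bar{\boldsymbol{\phi}}\bar{\boldsymbol{\phi}}^T)\boldsymbol{v} = \boldsymbol{u} + u_D \bar{\boldsymbol{\phi}}.
\end{equation*}
The matrix $I + \bar{\boldsymbol{\phi}}\bar{\boldsymbol{\phi}}^T$ is symmetric positive definite, with eigenvalues $1$ and $1+\|\bar{\boldsymbol{\phi}}\|_2^2$. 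Hence it is always invertible, and no full-rank assumption is needed, unlike in the Lagrange multiplier formulations.

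Rather than quoting Sherman--Morrison, I would mirror the proofs of the previous two propositions and introduce the scalar residual $\mu := \bar{\boldsymbol{\phi}}^T\boldsymbol{v} - u_D$. It plays the role that $\lambda$ played in the ROD-E case. The optimality condition then gives
\begin{equation*}
\boldsymbol{v} = \boldsymbol{u} - \mu\,\bar{\boldsymbol{\phi}}.
\end{equation*}
Multiplying this by $\bar{\boldsymbol{\phi}}^T$ and subtracting $u_D$ yields the scalar equation $\mu = \bar{\boldsymbol{\phi}}^T\boldsymbol{u} - u_D - \mu\|\bar{\boldsymbol{\phi}}\|_2^2$. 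Solving it gives
\begin{equation*}
\mu = \frac{\bar{\boldsymbol{\phi}}^T\boldsymbol{u} - u_D}{1+\|\bar{\boldsymbol{\phi}}\|_2^2},
\end{equation*}
and substituting back gives $\boldsymbol{v}$ explicitly.

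The last step evaluates the reconstructed polynomial at the surrogate point:
\begin{equation*}
\tilde u_D = \tilde{\boldsymbol{\phi}}^T\boldsymbol{v} = \tilde{\boldsymbol{\phi}}^T\boldsymbol{u} - \frac{\tilde{\boldsymbol{\phi}}^T\bar{\boldsymbol{\phi}}}{1+\|\bar{\boldsymbol{\phi}}\|_2^2}\bigl(\bar{\boldsymbol{\phi}}^T\boldsymbol{u} - u_D\bigr).
\end{equation*}
Collecting the terms in $\boldsymbol{u}$ and in $u_D$ reproduces \eqref{eq:rodEw_final}, with $\alpha_D^{\text{ROD-E-w}}$ as in \eqref{eq:alpha_rod_e_w}.

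There is no real obstacle; the only point needing care is the well-posedness of the scalar elimination. It holds because the denominator $1+\|\bar{\boldsymbol{\phi}}\|_2^2 \geq 1 > 0$, and this denominator also explains why the coefficient differs from $\alpha^{\text{ROD-E}}_D$ in \eqref{eq:alpha_rod_e} only by the additional $1$. I would also remark that, as with ROD-E, the multidimensional statement follows pointwise along the mapping $\mathcal{M}$ in \eqref{eq:mapping}, since the derivation never uses the spatial dimension.
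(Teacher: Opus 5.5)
Your proof is correct. It differs from the paper's only in how the rank-one system $(I+\bar{\boldsymbol{\phi}}\bar{\boldsymbol{\phi}}^T)\boldsymbol{v}=\boldsymbol{u}+u_D\bar{\boldsymbol{\phi}}$ is solved.

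The paper quotes the Sherman--Morrison formula, writes $(I+\bar{\boldsymbol{\phi}}\bar{\boldsymbol{\phi}}^T)^{-1}=I-\bar{\boldsymbol{\phi}}\bar{\boldsymbol{\phi}}^T/(1+\|\bar{\boldsymbol{\phi}}\|_2^2)$, and multiplies it out against the right-hand side. That requires some bookkeeping to merge the two $u_D\bar{\boldsymbol{\phi}}$ terms. You instead eliminate the scalar residual $\mu=\bar{\boldsymbol{\phi}}^T\boldsymbol{v}-u_D$, which is exactly the computation underlying Sherman--Morrison for this particular right-hand side.

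Your route is self-contained and shorter. It also makes the link to ROD-E transparent: the weak penalty replaces the multiplier $\lambda$ by the constraint residual itself, so the extra $1$ in the denominator of $\alpha_D^{\text{ROD-E-w}}$ comes straight out of the scalar equation. The paper's route produces the full inverse operator, which it then mirrors with the generalized Sherman--Morrison formula in the ROD-$L^2$-w case. Your elimination extends there equally easily, via $\boldsymbol{v}=\boldsymbol{u}-\mu M^{-1}\bar{\boldsymbol{\phi}}$ and $\mu(1+\bar{\boldsymbol{\phi}}^TM^{-1}\bar{\boldsymbol{\phi}})=\bar{\boldsymbol{\phi}}^T\boldsymbol{u}-u_D$.

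Your remark that $I+\bar{\boldsymbol{\phi}}\bar{\boldsymbol{\phi}}^T$ is symmetric positive definite is a useful addition. The elimination only shows what form any solution must take, and the invertibility guarantees such a solution exists and is unique. The paper leaves this point implicit.
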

\begin{proof}
    By rearranging the previous equation, the values of $\boldsymbol{v}$ is obtained as the solution of the following linear system
\begin{equation}
    (I + \boldsymbol{\phi}(\bar{\boldsymbol{x}}) \boldsymbol{\phi}^T(\bar{\boldsymbol{x}})) \boldsymbol{v} = \boldsymbol{u} + u_D(\bar{\boldsymbol{x}}) \boldsymbol{\phi}(\bar{\boldsymbol{x}}).
\end{equation}
The inverse of the matrix $I + \boldsymbol{\phi}(\bar{\boldsymbol{x}}) \boldsymbol{\phi}^T(\bar{\boldsymbol{x}})$ can be computed by using the Sherman-Morrison formula, which gives
\begin{equation}
    (I + \boldsymbol{\phi}(\bar{\boldsymbol{x}}) \boldsymbol{\phi}^T(\bar{\boldsymbol{x}}))^{-1} = I - \frac{\boldsymbol{\phi}(\bar{\boldsymbol{x}}) \boldsymbol{\phi}^T(\bar{\boldsymbol{x}})}{1 + \boldsymbol{\phi}^T(\bar{\boldsymbol{x}}) \boldsymbol{\phi}(\bar{\boldsymbol{x}})}.
\end{equation}
Therefore, the modified degrees of freedom $\boldsymbol{v}$ can be computed as
\begin{equation}
    \begin{split}
    \boldsymbol{v} &= (I + \boldsymbol{\phi}(\bar{\boldsymbol{x}}) \boldsymbol{\phi}^T(\bar{\boldsymbol{x}}))^{-1} (\boldsymbol{u} + u_D(\bar{\boldsymbol{x}}) \boldsymbol{\phi}(\bar{\boldsymbol{x}})) \\
    &= \left( I - \frac{\boldsymbol{\phi}(\bar{\boldsymbol{x}}) \boldsymbol{\phi}^T(\bar{\boldsymbol{x}})}{1 + \|\boldsymbol{\phi}(\bar{\boldsymbol{x}})\|^2_2} \right) (\boldsymbol{u} + u_D(\bar{\boldsymbol{x}}) \boldsymbol{\phi}(\bar{\boldsymbol{x}})) \\
    &= \boldsymbol{u} + u_D(\bar{\boldsymbol{x}}) \boldsymbol{\phi}(\bar{\boldsymbol{x}}) - \frac{\boldsymbol{\phi}(\bar{\boldsymbol{x}}) \boldsymbol{\phi}^T(\bar{\boldsymbol{x}})}{1 + \|\boldsymbol{\phi}(\bar{\boldsymbol{x}})\|^2_2} (\boldsymbol{u} + u_D(\bar{\boldsymbol{x}}) \boldsymbol{\phi}(\bar{\boldsymbol{x}})) \\
    &= \boldsymbol{u} - \frac{\boldsymbol{\phi}^T(\bar{\boldsymbol{x}})\boldsymbol{u} - u_D(\bar{\boldsymbol{x}})}{1 + \|\boldsymbol{\phi}(\bar{\boldsymbol{x}})\|^2_2} \boldsymbol{\phi}(\bar{\boldsymbol{x}}) .
    \end{split}
\end{equation}

By evaluating the polynomial $\tilde u_D$ at the computational boundary point $\tilde{\boldsymbol{x}}$, we obtain
\begin{equation}
    \begin{split}
    \tilde u_D = \boldsymbol{\phi}^T(\tilde{\boldsymbol{x}}) \boldsymbol{v} = \boldsymbol{\phi}^T(\tilde{\boldsymbol{x}}) \boldsymbol{u} - \frac{\boldsymbol{\phi}^T(\bar{\boldsymbol{x}})\boldsymbol{u} - u_D(\bar{\boldsymbol{x}})}{1 + \|\boldsymbol{\phi}(\bar{\boldsymbol{x}})\|^2_2} \boldsymbol{\phi}^T(\tilde{\boldsymbol{x}}) \boldsymbol{\phi}(\bar{\boldsymbol{x}}) , 
    \end{split}
\end{equation}
which is the polynomial correction in \eqref{eq:rodEw_final}.
\end{proof}

Contrary to the previously developed polynomial corrections, the method \eqref{eq:rodEw_final} is not exact on the true boundary point $\bar{\boldsymbol{x}}$, meaning that $\tilde u_D(\bar{\boldsymbol{x}})\ne u_D$. Indeed, by simple computations, we can show that
\begin{equation}
    \begin{split}
    \tilde u_D(\bar{\boldsymbol{x}}) &= \boldsymbol{\phi}^T(\bar{\boldsymbol{x}}) \boldsymbol{v} \\
    &= \boldsymbol{\phi}^T(\bar{\boldsymbol{x}}) \boldsymbol{u} - \frac{\boldsymbol{\phi}^T(\bar{\boldsymbol{x}})\boldsymbol{u} - u_D}{1 + \|\boldsymbol{\phi}(\bar{\boldsymbol{x}})\|^2_2} \|\boldsymbol{\phi}(\bar{\boldsymbol{x}})\|^2_2 \\
    &= \left(1 - \frac{\|\boldsymbol{\phi}(\bar{\boldsymbol{x}})\|^2_2}{1 + \|\boldsymbol{\phi}(\bar{\boldsymbol{x}})\|^2_2}\right) \boldsymbol{\phi}^T(\bar{\boldsymbol{x}}) \boldsymbol{u} + \frac{\|\boldsymbol{\phi}(\bar{\boldsymbol{x}})\|^2_2}{1 + \|\boldsymbol{\phi}(\bar{\boldsymbol{x}})\|^2_2} u_D ,
    \end{split}
\end{equation}
which is a convex combination of $\boldsymbol{\phi}^T(\bar{\boldsymbol{x}}) \boldsymbol{u}$ and $u_D$ with a coefficient that is always smaller than one, and therefore it is never equal to $u_D$.

\subsubsection{ROD-\texorpdfstring{$L^2$}{L2}-w polynomial correction}

As before, a different approach can be developed by minimizing the $L^2$ norm in the cost function and adding the boundary condition as a weak constraint. In this case, the functional reads
\begin{equation}
     \mathcal{L}(\boldsymbol{v}) = \frac12 \|\tilde u_D - u_h\|^2_{L^2(\Omega)} +  \frac12(\tilde u_D(\bar{\boldsymbol{x}}) - u_D)^2,
\end{equation}
and the following optimality condition is obtained in matrix form
\begin{equation}\label{eq:rodL2wsystem}
    \frac{\partial \mathcal{L}}{\partial \boldsymbol{v}} = M(\boldsymbol{v} - \boldsymbol{u}) + (\boldsymbol{\phi}^T(\bar{\boldsymbol{x}}) \boldsymbol{v} - u_D) \boldsymbol{\phi}(\bar{\boldsymbol{x}}) = \boldsymbol{0}.
\end{equation}
The coefficients $\boldsymbol{v}$ are obtained by solving the linear system \eqref{eq:rodL2wsystem}, and the evaluated polynomial $\tilde u_D$ in $\tilde{\boldsymbol{x}}$ can be computed. This boundary treatment can also be recast as a polynomial correction, as shown next. 

\begin{proposition}
Computing the ROD-$L^2$-w modified Dirichlet boundary condition by inverting the linear system \eqref{eq:rodL2wsystem} and evaluating the polynomial $\tilde u_D$ in $\tilde{\boldsymbol{x}}$ is equivalent to the following polynomial correction
\begin{equation}\label{eq:rodL2w_final}
    \tilde u_D = [\boldsymbol{\phi}(\tilde{\boldsymbol{x}}) - \alpha_D^{\text{ROD-$L^2$-w}} \boldsymbol{\phi}(\bar{\boldsymbol{x}})]^T \boldsymbol{u} + \alpha_D^{\text{ROD-$L^2$-w}} u_D,\quad \quad \text{with} 
\end{equation}
\begin{equation}\label{eq:alpha_rod_L2_w}
    \alpha_D^{\text{ROD-$L^2$-w}} = \frac{\boldsymbol{\phi}(\tilde{\boldsymbol{x}})^T M^{-1}\boldsymbol{\phi}(\bar{\boldsymbol{x}})}{1+\|\boldsymbol{\phi}(\bar{\boldsymbol{x}})\|^2_{M^{-1}}}.
\end{equation}
\end{proposition}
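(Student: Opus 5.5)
We mirror the ROD-E-w argument, with the identity replaced by the mass matrix $M$. Rearranging the optimality condition \eqref{eq:rodL2wsystem}, the modified coefficients $\boldsymbol{v}$ solve the rank-one perturbed system
\begin{equation*}
    \bigl(M + \boldsymbol{\phi}(\bar{\boldsymbol{x}})\boldsymbol{\phi}^T(\bar{\boldsymbol{x}})\bigr)\boldsymbol{v} = M\boldsymbol{u} + u_D\,\boldsymbol{\phi}(\bar{\boldsymbol{x}}).
\end{equation*}
Since $M$ is symmetric positive definite, we write $\|\boldsymbol{\phi}(\bar{\boldsymbol{x}})\|^2_{M^{-1}} = \boldsymbol{\phi}^T(\bar{\boldsymbol{x}})M^{-1}\boldsymbol{\phi}(\bar{\boldsymbol{x}}) \ge 0$. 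The denominator $1+\|\boldsymbol{\phi}(\bar{\boldsymbol{x}})\|^2_{M^{-1}}$ is then strictly positive, so the Sherman--Morrison formula applies:
\begin{equation*}
    \bigl(M + \boldsymbol{\phi}(\bar{\boldsymbol{x}})\boldsymbol{\phi}^T(\bar{\boldsymbol{x}})\bigr)^{-1} = M^{-1} - \frac{M^{-1}\boldsymbol{\phi}(\bar{\boldsymbol{x}})\boldsymbol{\phi}^T(\bar{\boldsymbol{x}})M^{-1}}{1+\|\boldsymbol{\phi}(\bar{\boldsymbol{x}})\|^2_{M^{-1}}}.
\end{equation*}
This also shows the system \eqref{eq:rodL2wsystem} is uniquely solvable. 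Equivalently, $\mathcal{L}$ is strictly convex, so the stationary point is the unique minimizer.

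Next we apply this inverse to the right-hand side and simplify, as in the ROD-E-w proof. The term $M^{-1}M\boldsymbol{u}$ gives $\boldsymbol{u}$, and $M^{-1}u_D\boldsymbol{\phi}(\bar{\boldsymbol{x}})$ gives $u_D M^{-1}\boldsymbol{\phi}(\bar{\boldsymbol{x}})$. The rank-one part contributes
\begin{equation*}
    -\frac{\boldsymbol{\phi}^T(\bar{\boldsymbol{x}})\boldsymbol{u} + u_D\|\boldsymbol{\phi}(\bar{\boldsymbol{x}})\|^2_{M^{-1}}}{1+\|\boldsymbol{\phi}(\bar{\boldsymbol{x}})\|^2_{M^{-1}}}\,M^{-1}\boldsymbol{\phi}(\bar{\boldsymbol{x}}).
\end{equation*}
Collecting the coefficients of $u_D M^{-1}\boldsymbol{\phi}(\bar{\boldsymbol{x}})$ gives $1 - \|\boldsymbol{\phi}(\bar{\boldsymbol{x}})\|^2_{M^{-1}}/(1+\|\boldsymbol{\phi}(\bar{\boldsymbol{x}})\|^2_{M^{-1}}) = 1/(1+\|\boldsymbol{\phi}(\bar{\boldsymbol{x}})\|^2_{M^{-1}})$. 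We therefore obtain
\begin{equation*}
    \boldsymbol{v} = \boldsymbol{u} - \frac{\boldsymbol{\phi}^T(\bar{\boldsymbol{x}})\boldsymbol{u} - u_D}{1+\|\boldsymbol{\phi}(\bar{\boldsymbol{x}})\|^2_{M^{-1}}}\,M^{-1}\boldsymbol{\phi}(\bar{\boldsymbol{x}}).
\end{equation*}

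Finally, we evaluate at the surrogate point: $\tilde u_D = \boldsymbol{\phi}^T(\tilde{\boldsymbol{x}})\boldsymbol{v}$. The scalar multiplying $\boldsymbol{\phi}^T(\bar{\boldsymbol{x}})\boldsymbol{u} - u_D$ is exactly $\alpha_D^{\text{ROD-$L^2$-w}}$ from \eqref{eq:alpha_rod_L2_w}. Regrouping terms then yields the polynomial correction \eqref{eq:rodL2w_final}. The argument is routine linear algebra. The only steps needing care are verifying that Sherman--Morrison applies, which follows from the positive definiteness of $M$, and tracking the $u_D$ terms so that they collapse to the single factor $1/(1+\|\boldsymbol{\phi}(\bar{\boldsymbol{x}})\|^2_{M^{-1}})$.
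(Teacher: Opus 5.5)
Your proposal is correct and follows the paper's proof essentially step for step. You rearrange to the rank-one perturbed system $(M+\boldsymbol{\phi}(\bar{\boldsymbol{x}})\boldsymbol{\phi}^T(\bar{\boldsymbol{x}}))\boldsymbol{v} = M\boldsymbol{u} + u_D\boldsymbol{\phi}(\bar{\boldsymbol{x}})$, invert via Sherman--Morrison, simplify $\boldsymbol{v}$, and evaluate at $\tilde{\boldsymbol{x}}$, as the paper does, adding the justification (left implicit in the paper) that positive definiteness of $M$ keeps $1+\|\boldsymbol{\phi}(\bar{\boldsymbol{x}})\|^2_{M^{-1}}>0$; one minor wording slip is that the scalar multiplying $\boldsymbol{\phi}^T(\bar{\boldsymbol{x}})\boldsymbol{u}-u_D$ is $-\alpha_D^{\text{ROD-$L^2$-w}}$ rather than $\alpha_D^{\text{ROD-$L^2$-w}}$.
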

\begin{proof}
    By rearranging the previous equation, the values of $\boldsymbol{v}$ can be obtained as the solution of the following linear system as
\begin{equation}
    (M + \boldsymbol{\phi}(\bar{\boldsymbol{x}}) \boldsymbol{\phi}^T(\bar{\boldsymbol{x}})) \boldsymbol{v} = M\boldsymbol{u} + u_D \boldsymbol{\phi}(\bar{\boldsymbol{x}}).
\end{equation}
The inverse of the matrix $M + \boldsymbol{\phi}(\bar{\boldsymbol{x}}) \boldsymbol{\phi}^T(\bar{\boldsymbol{x}})$ can be computed by using the generalized Sherman-Morrison formula, which gives
\begin{equation}
    (M + \boldsymbol{\phi}(\bar{\boldsymbol{x}}) \boldsymbol{\phi}^T(\bar{\boldsymbol{x}}))^{-1} = M^{-1} - \frac{M^{-1}\boldsymbol{\phi}(\bar{\boldsymbol{x}}) \boldsymbol{\phi}^T(\bar{\boldsymbol{x}})M^{-1}}{1 + \boldsymbol{\phi}^T(\bar{\boldsymbol{x}}) M^{-1}\boldsymbol{\phi}(\bar{\boldsymbol{x}})}.
\end{equation}
Therefore, the modified degrees of freedom $\boldsymbol{v}$ can be computed as
\begin{equation}
    \begin{split}
    \boldsymbol{v} &= (M + \boldsymbol{\phi}(\bar{\boldsymbol{x}}) \boldsymbol{\phi}^T(\bar{\boldsymbol{x}}))^{-1} (M\boldsymbol{u} + u_D \boldsymbol{\phi}(\bar{\boldsymbol{x}})) \\
    &= \boldsymbol{u} + u_D M^{-1} \boldsymbol{\phi}(\bar{\boldsymbol{x}}) - M^{-1}\frac{\boldsymbol{\phi}(\bar{\boldsymbol{x}}) \boldsymbol{\phi}^T(\bar{\boldsymbol{x}})M^{-1}}{1 + \|\boldsymbol{\phi}(\bar{\boldsymbol{x}})\|^2_{M^{-1}}} (M\boldsymbol{u} + u_D \boldsymbol{\phi}(\bar{\boldsymbol{x}})) \\
    &= \boldsymbol{u} - \frac{\boldsymbol{\phi}^T(\bar{\boldsymbol{x}})\boldsymbol{u} - u_D}{1 + \|\boldsymbol{\phi}(\bar{\boldsymbol{x}})\|^2_{M^{-1}}} M^{-1}\boldsymbol{\phi}(\bar{\boldsymbol{x}}) .
    \end{split}
\end{equation}

By evaluating the polynomial $\tilde u_D$ at the computational boundary point $\tilde{\boldsymbol{x}}$, we obtain:
\begin{equation}
    \begin{split}
    \tilde u_D = \boldsymbol{\phi}^T(\tilde{\boldsymbol{x}}) \boldsymbol{v} = \boldsymbol{\phi}^T(\tilde{\boldsymbol{x}}) \boldsymbol{u} - \frac{\boldsymbol{\phi}^T(\bar{\boldsymbol{x}})\boldsymbol{u} - u_D}{1 + \|\boldsymbol{\phi}(\bar{\boldsymbol{x}})\|^2_{M^{-1}}} \boldsymbol{\phi}^T(\tilde{\boldsymbol{x}}) M^{-1}\boldsymbol{\phi}(\bar{\boldsymbol{x}}) , 
    \end{split}
\end{equation}
which is the polynomial correction \eqref{eq:rodL2w_final}
\end{proof}

\subsection{The Neumann and Robin cases}\label{sec:PolyCorrGen}

In this section, we demonstrate how the general framework from the previous section can be used to develop more general embedded boundary conditions, such as Neumann and Robin. For brevity, we present only the full development of Neumann and Robin boundary conditions for the ROD-E method here; afterwards, the expressions for all $\alpha_N$ for ROD-$L^2$, ROD-E-w, and ROD-$L^2$-w are provided.

We are looking to build a modified polynomial $\tilde u_N(\boldsymbol{x})=\boldsymbol{\phi}^T(\boldsymbol{x})\boldsymbol{v}$ that satisfies the Neumann boundary condition on the true boundary point. The Neumann boundary condition is enforced by imposing the normal derivative $\boldsymbol{n}(\bar{\boldsymbol{x}}) \cdot \nabla \tilde u_N(\bar{\boldsymbol{x}}) = q_N$ on the true boundary point $\bar{\boldsymbol{x}}$. 
We refer to $\boldsymbol{n}:=\boldsymbol{n}(\bar{\boldsymbol{x}})$ as the normal to the true boundary $\bar{\boldsymbol{x}}$, while we call $\tilde{\boldsymbol{n}}:=\tilde{\boldsymbol{n}}(\tilde{\boldsymbol{x}})$ the normal to the computational boundary point $\tilde{\boldsymbol{x}}$. 
In this case, the constrained minimization problem can be solved by introducing a Lagrange multiplier $\lambda$ and by considering the following functional as
\begin{equation}
    \mathcal{L}(\boldsymbol{v} , \lambda) = \frac12 \|\boldsymbol{v} - \boldsymbol{u}\|^2_2 + \lambda (\boldsymbol{n} \cdot \nabla \tilde u_N(\bar{\boldsymbol{x}}) - q_N ) .
\end{equation}
The optimality conditions for this problem can be written as
\begin{equation}
    \frac{\partial \mathcal{L}}{\partial \boldsymbol{v}} = \boldsymbol{v} - \boldsymbol{u} + \lambda  (\boldsymbol{n} \cdot \nabla \boldsymbol{\phi}(\bar{\boldsymbol{x}})) = \boldsymbol{0} , \qquad \frac{\partial \mathcal{L}}{\partial \lambda} = (\boldsymbol{n} \cdot \nabla \boldsymbol{\phi}(\bar{\boldsymbol{x}}))^T \boldsymbol{v} - q_N = 0 , 
\end{equation}
where $\boldsymbol{n}\cdot\nabla\boldsymbol{\phi}(\bar{\boldsymbol{x}}) = [\boldsymbol{n}\cdot\nabla\varphi_1(\bar{\boldsymbol{x}}),\boldsymbol{n}\cdot\nabla\varphi_2(\bar{\boldsymbol{x}}),\ldots]^T$ is a column vector. This problem can be formulated as
\begin{equation}\label{eq:RODsystem_2D_N}
\begin{bmatrix} I & \boldsymbol{n} \cdot \nabla \boldsymbol{\phi}(\bar{\boldsymbol{x}}) \\ (\boldsymbol{n} \cdot \nabla \boldsymbol{\phi}(\bar{\boldsymbol{x}}))^T & 0 \end{bmatrix} \begin{bmatrix}\boldsymbol{v} \\ \lambda \end{bmatrix} = \begin{bmatrix} \boldsymbol{u} \\ q_N \end{bmatrix}.
\end{equation}
By solving this system, the polynomial coefficients $\boldsymbol{v}$ are obtained, and the corresponding Neumann boundary condition on the computational boundary can be computed as $\tilde q_N:=\tilde{\boldsymbol{n}} \cdot \nabla \tilde u_N(\tilde{\boldsymbol{x}})$. In the following result, we also show that this Neumann condition can be written as a polynomial correction.

\begin{proposition}
Computing the ROD-E modified Neumann boundary condition by inverting the linear system \eqref{eq:RODsystem_2D_N} and evaluating the polynomial $\tilde q_N = \tilde{\boldsymbol{n}} \cdot \nabla \tilde u_N(\tilde{\boldsymbol{x}})$ in $\tilde{\boldsymbol{x}}$ is equivalent to the following polynomial correction
\begin{equation}\label{eq:rodN_final_2D}
    \tilde q_N = [\tilde{\boldsymbol{n}} \cdot \nabla \boldsymbol{\phi}(\tilde{\boldsymbol{x}}) - \alpha_N^{\text{ROD-E}} \boldsymbol{n} \cdot \nabla \boldsymbol{\phi}(\bar{\boldsymbol{x}})]^T \boldsymbol{u} + \alpha_N^{\text{ROD-E}} q_N , \quad \quad \text{with} 
\end{equation}
\begin{equation}
    \alpha_N^{\text{ROD-E}} = \frac{(\tilde{\boldsymbol{n}} \cdot \nabla \boldsymbol{\phi}(\tilde{\boldsymbol{x}}))^T (\boldsymbol{n} \cdot \nabla \boldsymbol{\phi}(\bar{\boldsymbol{x}}))}{\|\boldsymbol{n} \cdot \nabla \boldsymbol{\phi}(\bar{\boldsymbol{x}})\|_2^2}.
\end{equation}
\end{proposition}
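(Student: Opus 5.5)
The plan mirrors the proof of the ROD-E Dirichlet proposition, with the vector $\boldsymbol{\phi}(\bar{\boldsymbol{x}})$ replaced throughout by the column vector $\boldsymbol{g} := \boldsymbol{n}\cdot\nabla\boldsymbol{\phi}(\bar{\boldsymbol{x}})$. The evaluation functional $\boldsymbol{\phi}^T(\tilde{\boldsymbol{x}})$ is likewise replaced by $\tilde{\boldsymbol{g}}^T$, with $\tilde{\boldsymbol{g}} := \tilde{\boldsymbol{n}}\cdot\nabla\boldsymbol{\phi}(\tilde{\boldsymbol{x}})$. The key observation is that both the constraint and the final evaluation are linear functionals of the coefficient vector $\boldsymbol{v}$. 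Because of this, the Lagrange-multiplier system \eqref{eq:RODsystem_2D_N} has exactly the algebraic structure of \eqref{eq:RODsystem}, and the Dirichlet argument transfers verbatim.

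First, I will solve the first optimality condition for the coefficients, giving $\boldsymbol{v} = \boldsymbol{u} - \lambda \boldsymbol{g}$. Substituting into the constraint $\boldsymbol{g}^T\boldsymbol{v} = q_N$ yields
\begin{equation*}
\lambda = \frac{\boldsymbol{g}^T\boldsymbol{u} - q_N}{\|\boldsymbol{g}\|_2^2},
\end{equation*}
and hence
\begin{equation*}
\boldsymbol{v} = \boldsymbol{u} - \frac{\boldsymbol{g}^T\boldsymbol{u} - q_N}{\|\boldsymbol{g}\|_2^2}\,\boldsymbol{g}.
\end{equation*}

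Next, I will evaluate the modified Neumann datum. Since $\tilde u_N = \boldsymbol{\phi}^T\boldsymbol{v}$ and differentiation is linear, $\tilde q_N = \tilde{\boldsymbol{n}}\cdot\nabla\tilde u_N(\tilde{\boldsymbol{x}}) = \tilde{\boldsymbol{g}}^T\boldsymbol{v}$. Inserting $\boldsymbol{v}$ gives
\begin{equation*}
\tilde q_N = \tilde{\boldsymbol{g}}^T\boldsymbol{u} - \frac{\tilde{\boldsymbol{g}}^T\boldsymbol{g}}{\|\boldsymbol{g}\|_2^2}\left(\boldsymbol{g}^T\boldsymbol{u} - q_N\right).
\end{equation*}
Collecting the terms in $\boldsymbol{u}$ and the term in $q_N$ produces exactly \eqref{eq:rodN_final_2D}, with $\alpha_N^{\text{ROD-E}} = \tilde{\boldsymbol{g}}^T\boldsymbol{g}/\|\boldsymbol{g}\|_2^2$.

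The only point requiring care, and the main (mild) obstacle, is well-posedness. The system \eqref{eq:RODsystem_2D_N} is nonsingular, and the division above is legitimate, precisely when $\boldsymbol{g}\neq\boldsymbol{0}$. I will justify this by noting that the basis $\{\phi_j\}$ spans $\mathbb{P}^p$ with $p\ge1$, which contains a linear function whose gradient is $\boldsymbol{n}$. Its normal derivative at $\bar{\boldsymbol{x}}$ equals $1$, so not all $\boldsymbol{n}\cdot\nabla\phi_j(\bar{\boldsymbol{x}})$ can vanish. With $\boldsymbol{g}\neq\boldsymbol{0}$, the Schur complement $\|\boldsymbol{g}\|_2^2$ is positive, so the saddle-point system is invertible and its unique solution is the $\boldsymbol{v}$ computed above. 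Equivalence with the polynomial correction then follows.
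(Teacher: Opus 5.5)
Your proposal is correct and follows the same route as the paper's proof: solve the first optimality condition for $\boldsymbol{v}$, determine $\lambda$ from the constraint, substitute into $\tilde q_N = (\tilde{\boldsymbol{n}}\cdot\nabla\boldsymbol{\phi}(\tilde{\boldsymbol{x}}))^T\boldsymbol{v}$, and collect terms. Your added check that $\boldsymbol{n}\cdot\nabla\boldsymbol{\phi}(\bar{\boldsymbol{x}})\neq\boldsymbol{0}$, using the linear function $\boldsymbol{n}\cdot\boldsymbol{x}$ in the span of the basis on affine elements, is valid and makes explicit the nonzero denominator (and the invertibility of the saddle-point system) that the paper leaves implicit.
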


\begin{proof}
The proof is similar to the Dirichlet case, but we give the details for completeness. From the optimality conditions, we obtain the following relation for $\boldsymbol{v}$ and $\lambda$:
\begin{equation}
    \boldsymbol{v} = \boldsymbol{u} - \lambda  (\boldsymbol{n} \cdot \nabla \boldsymbol{\phi}(\bar{\boldsymbol{x}})), \qquad \lambda = \frac{(\boldsymbol{n} \cdot \nabla \boldsymbol{\phi}(\bar{\boldsymbol{x}}))^T \boldsymbol{u} - q_N}{(\boldsymbol{n} \cdot \nabla \boldsymbol{\phi}(\bar{\boldsymbol{x}}))^T(\boldsymbol{n} \cdot \nabla \boldsymbol{\phi}(\bar{\boldsymbol{x}}))} .
\end{equation}
In order to write it as a polynomial correction, it is necessary to consider the evaluation of the $q_N = \tilde{\boldsymbol{n}} \cdot \nabla \tilde u_N(\tilde{\boldsymbol{x}})$ in the computational boundary point $\tilde{\boldsymbol{x}}$ as
\begin{equation}
    \begin{split}
    \tilde q_N &= (\tilde{\boldsymbol{n}} \cdot \nabla \boldsymbol{\phi}(\tilde{\boldsymbol{x}}))^T \boldsymbol{v} \\
    &= (\tilde{\boldsymbol{n}} \cdot \nabla \boldsymbol{\phi}(\tilde{\boldsymbol{x}}))^T \boldsymbol{u} - \left( \frac{(\boldsymbol{n} \cdot \nabla \boldsymbol{\phi}(\bar{\boldsymbol{x}}))^T \boldsymbol{u} - q_N}{\|\boldsymbol{n} \cdot \nabla \boldsymbol{\phi}(\bar{\boldsymbol{x}})\|_2^2} \right) (\tilde{\boldsymbol{n}} \cdot \nabla \boldsymbol{\phi}(\tilde{\boldsymbol{x}}))^T (\boldsymbol{n} \cdot \nabla \boldsymbol{\phi}(\bar{\boldsymbol{x}})),
    \end{split}
\end{equation}
where $\tilde{\boldsymbol{n}}$ is the normal vector at $\tilde{\boldsymbol{x}}$. From the previous equation, we directly obtain the polynomial correction in \eqref{eq:rodN_final_2D}.

\end{proof}

Following the same path, ROD-E, ROD-$L^2$, ROD-E-w, and ROD-$L^2$-w polynomial corrections for Neumann boundary conditions can be expressed in the following unified form
\begin{equation}
    \tilde q_N = [\tilde{\boldsymbol{n}} \cdot \nabla \boldsymbol{\phi}(\tilde{\boldsymbol{x}}) - \alpha_N \,\boldsymbol{n} \cdot \nabla \boldsymbol{\phi}(\bar{\boldsymbol{x}})]^T \boldsymbol{u} + \alpha_N q_N,
\end{equation}
where the coefficient $\alpha_N$ depends on the specific method as

\begin{subequations}\label{eq:alpha_all_NBC}
\begin{align}
\alpha_N^{\text{ROD-E}} &= \frac{(\tilde{\boldsymbol{n}} \cdot \nabla \boldsymbol{\phi}(\tilde{\boldsymbol{x}}))^T (\boldsymbol{n} \cdot \nabla \boldsymbol{\phi}(\bar{\boldsymbol{x}}))}{\|\boldsymbol{n} \cdot \nabla \boldsymbol{\phi}(\bar{\boldsymbol{x}})\|_2^2}, \\[10pt]
\alpha_N^{\text{ROD-}L^2} &= \frac{(\tilde{\boldsymbol{n}} \cdot \nabla \boldsymbol{\phi}(\tilde{\boldsymbol{x}}))^T M^{-1} (\boldsymbol{n} \cdot \nabla \boldsymbol{\phi}(\bar{\boldsymbol{x}}))}{\|\boldsymbol{n} \cdot \nabla \boldsymbol{\phi}(\bar{\boldsymbol{x}})\|_{M^{-1}}^2}, \\[10pt]
\alpha_N^{\text{ROD-E-w}} &= \frac{(\tilde{\boldsymbol{n}} \cdot \nabla \boldsymbol{\phi}(\tilde{\boldsymbol{x}}))^T (\boldsymbol{n} \cdot \nabla \boldsymbol{\phi}(\bar{\boldsymbol{x}}))}{1 + \|\boldsymbol{n} \cdot \nabla \boldsymbol{\phi}(\bar{\boldsymbol{x}})\|_2^2}, \\[10pt]
\alpha_N^{\text{ROD-}L^2\text{-w}} &= \frac{(\tilde{\boldsymbol{n}} \cdot \nabla \boldsymbol{\phi}(\tilde{\boldsymbol{x}}))^T M^{-1} (\boldsymbol{n} \cdot \nabla \boldsymbol{\phi}(\bar{\boldsymbol{x}}))}{1 + \|\boldsymbol{n} \cdot \nabla \boldsymbol{\phi}(\bar{\boldsymbol{x}})\|_{M^{-1}}^2}.
\end{align}
\end{subequations}

It is interesting to notice the difference in the obtained polynomial corrections using a minimization-based approach with respect to the original shifted boundary Neumann correction reported in \eqref{eq:SBNeumannPolyCorr}, where $\alpha_N^{\text{SBM}} = \tilde{\boldsymbol{n}}\cdot\boldsymbol{n}$.

Finally, we conclude this section by generalizing all polynomial corrections developed above to Robin boundary conditions. Following the same notation, we want to build a modified polynomial $\tilde u_R(\boldsymbol{x})=\boldsymbol{\phi}^T(\boldsymbol{x})\boldsymbol{v}$ such that the following constraint is satisfied
\begin{equation}
    \tilde u_R(\bar{\boldsymbol{x}}) + \varepsilon \; \boldsymbol{n}\cdot\nabla \tilde u_R(\bar{\boldsymbol{x}}) = u_{RD} + \varepsilon q_{RN} = g_R,
\end{equation}
where $g_R$ is the exact Robin boundary condition at the true boundary point $\bar{\boldsymbol{x}}$, and $\varepsilon$ is the Robin coefficient.
For simplicity, let us call 
\begin{equation}
\boldsymbol{g}(\tilde{\boldsymbol{n}},\tilde{\boldsymbol{x}} ) = \boldsymbol{\phi}(\tilde{\boldsymbol{x}}) + \varepsilon \; \tilde{\boldsymbol{n}} \cdot \nabla \boldsymbol{\phi}(\tilde{\boldsymbol{x}}) \quad \text{and} \quad 
\boldsymbol{g}(\boldsymbol{n},\bar{\boldsymbol{x}}) = \boldsymbol{\phi}(\bar{\boldsymbol{x}}) + \varepsilon \;\boldsymbol{n} \cdot \nabla \boldsymbol{\phi}(\bar{\boldsymbol{x}}),
\end{equation}
the Robin combination of basis functions is evaluated on the surrogate boundary and on the true boundary, respectively. The modified boundary condition $\tilde g_R := \tilde u_R(\tilde{\boldsymbol{x}}) + \varepsilon \; \tilde{\boldsymbol{n}} \cdot \nabla \tilde u_R(\tilde{\boldsymbol{x}})$ in the surrogate boundary point $\tilde{\boldsymbol{x}}$ reads
\begin{equation}\label{eq:rodR_final_2D}
    \begin{split}
    \tilde g_R = [\boldsymbol{g}(\tilde{\boldsymbol{n}},\tilde{\boldsymbol{x}}) -\alpha_R \boldsymbol{g}(\boldsymbol{n},\bar{\boldsymbol{x}})]^T \boldsymbol{u} + \alpha_R g_R , 
    \end{split}
\end{equation}
where the coefficient $\alpha_R$ depends on the specific method as
\begin{subequations}
\begin{align}
\alpha_R^{\text{ROD-E}} &= \frac{\boldsymbol{g}^T(\tilde{\boldsymbol{n}},\tilde{\boldsymbol{x}}) \boldsymbol{g}(\boldsymbol{n},\bar{\boldsymbol{x}})}{\|\boldsymbol{g}(\boldsymbol{n},\bar{\boldsymbol{x}})\|_2^2}, \\[10pt]
\alpha_R^{\text{ROD-}L^2} &= \frac{\boldsymbol{g}^T(\tilde{\boldsymbol{n}},\tilde{\boldsymbol{x}}) M^{-1} \boldsymbol{g}(\boldsymbol{n},\bar{\boldsymbol{x}})}{\|\boldsymbol{g}(\boldsymbol{n},\bar{\boldsymbol{x}})\|_{M^{-1}}^2}, \\[10pt]
\alpha_R^{\text{ROD-E-w}} &= \frac{\boldsymbol{g}^T(\tilde{\boldsymbol{n}},\tilde{\boldsymbol{x}}) \boldsymbol{g}(\boldsymbol{n},\bar{\boldsymbol{x}})}{1 + \|\boldsymbol{g}(\boldsymbol{n},\bar{\boldsymbol{x}})\|_2^2}, \\[10pt]
\alpha_R^{\text{ROD-}L^2\text{-w}} &= \frac{\boldsymbol{g}^T(\tilde{\boldsymbol{n}},\tilde{\boldsymbol{x}}) M^{-1} \boldsymbol{g}(\boldsymbol{n},\bar{\boldsymbol{x}})}{1 + \|\boldsymbol{g}(\boldsymbol{n},\bar{\boldsymbol{x}})\|_{M^{-1}}^2}.
\end{align}
\end{subequations}

\section{Final variational formulation}

For the sake of completeness, we summarize the final expressions for the unknown boundary value conditions on the computational unfitted surrogate boundary, $\tilde{\Gamma}_h$, in \eqref{eq:unknown_boundary_data} in terms of the true boundary conditions on $\Gamma$ with a generic polynomial correction. Recall the general variation statement in \eqref{eq:generalized_form}, with the bi-linear and linear form on an unfitted domain given in \eqref{eq:unfitted_bilinear_form} and \eqref{eq:unfitted_linear_form}, respectively. The unknown boundary data from \eqref{eq:unknown_boundary_data} are then expressed (in a very generic symbolic form with no assumption on polynomial correction, etc.) as
\begin{subequations}\label{eq:known_boundary_data}
\begin{align}
    \tilde{u}_D    &= u_h(\tilde{\boldsymbol{x}}) -  \alpha_D u_h(\bar{\boldsymbol{x}}) + \alpha_D u_D,\\
    \tilde{q}_N    &= \tilde{\boldsymbol{n}}\cdot \boldsymbol{\nabla} u_h(\tilde{\boldsymbol{x}}) - \alpha_N \boldsymbol{n}\cdot \boldsymbol{\nabla} u_h(\bar{\boldsymbol{x}}) + \alpha_N q_N,  \\
    \tilde{g}_{R} &= u_h(\tilde{\boldsymbol{x}}) + \varepsilon \tilde{\boldsymbol{n}}\cdot \boldsymbol{\nabla} u_h(\tilde{x}) - \alpha_R(u_h(\bar{\boldsymbol{x}}) +  \varepsilon \boldsymbol{n}\cdot \boldsymbol{\nabla} u_h(\bar{\boldsymbol{x}})) + \alpha_R g_R,
\end{align}
\end{subequations}
which are then inserted in \eqref{eq:unfitted_linear_form} and rearranged to give the final variational formulation as
\begin{subequations}\label{eq:unfitted_bilinear_form_final}
\begin{align}
    a\langle u_h,w_h \rangle_{\tilde{\Omega}_h} &= (\boldsymbol{\nabla} u_h, \boldsymbol{\nabla} w_h)_{\tilde{\Omega}_h} + \beta  ( u_h,  w_h)_{\tilde{\Omega}_h},  \\
    a\langle u_h,w_h \rangle_{\tilde{\Gamma}_{h,D}} &=  \gamma^{-1} (\alpha_D u_h(\bar{\boldsymbol{x}}),w_h)_{\tilde{\Gamma}_{h,D}} - (\boldsymbol{\nabla} u_h \cdot \tilde{\boldsymbol{n}},w_h)_{\tilde{\Gamma}_{h,D}} - ( \alpha_D u_h(\bar{\boldsymbol{x}}),\boldsymbol{\nabla} w_h \cdot \tilde{\boldsymbol{n}})_{\tilde{\Gamma}_{h,D}},  \\
    a \langle u_h,w_h \rangle_{\tilde{\Gamma}_{h,N}} &= (\alpha_N \boldsymbol{n} \cdot \boldsymbol{\nabla} u_h(\bar{\boldsymbol{x}}) - \tilde{\boldsymbol{n}}\cdot \boldsymbol{\nabla} u_h,w_h)_{\tilde{\Gamma}_{h,N}}, \\
    a \langle u_h,w_h \rangle_{\tilde{\Gamma}_{h,R}} &= \frac{1}{\varepsilon + \gamma} ( \alpha_R(u_h(\bar{\boldsymbol{x}}) +  \varepsilon \boldsymbol{n}\cdot \boldsymbol{\nabla} u_h(\bar{\boldsymbol{x}})) - \varepsilon \tilde{\boldsymbol{n}}\cdot \boldsymbol{\nabla} u_h,w_h)_{\tilde{\Gamma}_{h,R}} 
     \notag \\
    & - \frac{\gamma}{\varepsilon + \gamma} (\alpha_R(u_h(\bar{\boldsymbol{x}}) +  \varepsilon \boldsymbol{n}\cdot \boldsymbol{\nabla} u_h(\bar{\boldsymbol{x}})) - \varepsilon \tilde{\boldsymbol{n}}\cdot \boldsymbol{\nabla} u_h,\boldsymbol{\nabla} w_h \cdot \tilde{\boldsymbol{n}})_{\tilde{\Gamma}_{h,R}} \notag \\
    & - \frac{\gamma}{\varepsilon + \gamma} (\boldsymbol{\nabla} u_h \cdot \tilde{\boldsymbol{n}},w_h)_{\tilde{\Gamma}_{h,R}} - \frac{\varepsilon\gamma}{\varepsilon + \gamma} (\boldsymbol{\nabla} u_h \cdot \tilde{\boldsymbol{n}},\boldsymbol{\nabla} w_h \cdot \tilde{\boldsymbol{n}})_{\tilde{\Gamma}_{h,R}} ,   
\end{align}
\end{subequations}
and
\begin{subequations}\label{eq:unfitted_linear_form_final}
\begin{align} 
    b\langle w_h \rangle_{\tilde{\Omega}_h} &= (f,w_h)_{\tilde{\Omega}_h},  \\
    b\langle w_h \rangle_{\tilde{\Gamma}_{h,D}} &=  \gamma^{-1} (\alpha_D u_D,w_h)_{\tilde{\Gamma}_{h,D}} 
    -(\alpha_D u_D,\boldsymbol{\nabla} w_h \cdot \tilde{\boldsymbol{n}})_{\tilde{\Gamma}_{h,D}} ,  \\
    b \langle w_h \rangle_{\tilde{\Gamma}_{h,N}} &= (\alpha_Nq_N,w_h)_{\tilde{\Gamma}_{h,N}},  \\
    b\langle w_h \rangle_{\tilde{\Gamma}_{h,R}} &=  \frac{1}{\varepsilon + \gamma} (\alpha_R g_R,w_h)_{\tilde{\Gamma}_{h,R}} - \frac{\gamma}{\varepsilon + \gamma} (\alpha_R g_{R},\boldsymbol{\nabla} w_h \cdot \tilde{\boldsymbol{n}})_{\tilde{\Gamma}_{h,R}}.
\end{align}
\end{subequations}

\section{Results in 1D}\label{sec:results_1D}

As an initial investigation, we examine numerical results in 1D. The computational domain is built with shifted boundaries at the left and right ends, separated by a distance $d$. Note that $d=0$ corresponds to the conformal case, and for example, $d/h=1$ indicates a distance of one element length.

\subsection{Visualization of the polynomial corrections and their boundedness}

First, we visualize the extrapolated nodal Lagrangian basis function in the reference domain, $\mathcal{R}$, given by $\phi(r)$, for different polynomial orders, $p \in \{1,...,4\}$. Consider Section \ref{sec:SEM} to recall the notation. We zoom in on the leftmost element and consider only the essential boundary condition for simplicity. Additionally, we show the shape of the polynomial correction, denoted as $\mathcal{P} = \phi - \alpha\phi|_{x=\bar{x}}$, where $\tilde{u}_D = \mathcal{P}u_h + \alpha u_D$ in a general sense. We do this for the original polynomial correction, SBM, and for the minimization-based approaches ROD-E and ROD-$L^2$, as visualized in Figure \ref{fig:basis_functions} for a shifted boundary distance of $d/h=1$. In the first column, the nodal basis functions are shown inside the element (solid lines) and extrapolated outside it (dashed lines), highlighting unbounded behavior when evaluating them outside the element, especially at high orders. Now, considering the polynomial correction in $\mathcal{R}$, $\mathcal{P}(r)$ for SBM in the middle column, the imposed correction, $\mathcal{P}(r=-1)$, behaves similarly unbounded. Looking at ROD-E and ROD-$L^2$, in the right column, we clearly see how the minimized-based polynomial correction i) mimics the internal Lagrangian polynomial and ii) remains bounded even at higher polynomial orders. The same behavior is observed for $p\geq 5$ and for ROD-E-w and ROD-$L^2$-w, but it is not shown here for the sake of brevity.

To elaborate further on the behavior of $\max |\mathcal{P}(r=-1)|$, we extend the analysis to a range of $d/h \in [0,2]$ for polynomial order $p \in \{1,\ldots,6\}$ as shown in Figure \ref{fig:DBC_maximum_basis}. This again confirms the unbounded trend of the original polynomial correction (SBM) and the boundedness of the minimization-based approaches (ROD-E and ROD-$L^2$).

\newpage

\begin{figure}
    \centering
    \includegraphics[scale=0.8]{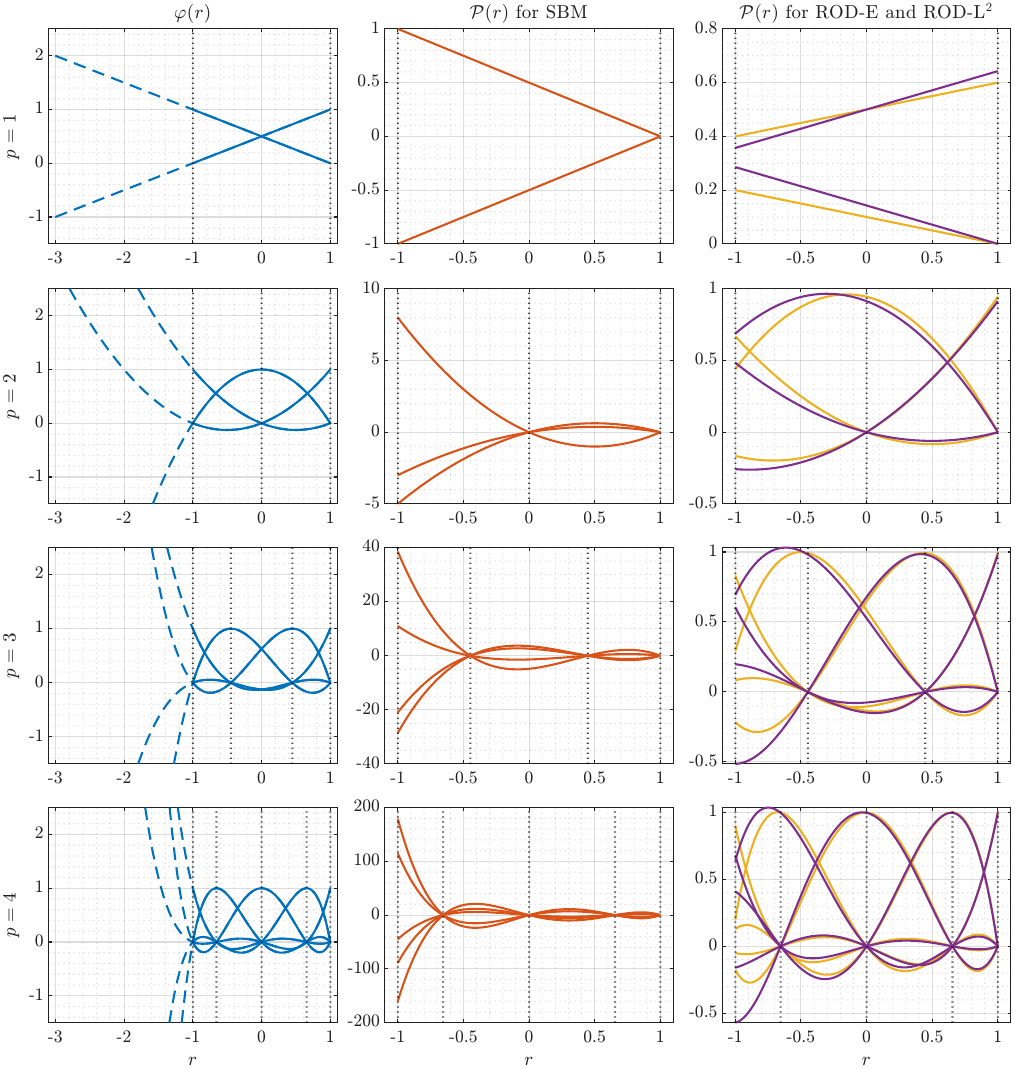}
    \caption{Left column: Nodal basis functions, $\phi(r)$, inside (solid lines) and extrapolated (dashed lines) outside the element for different polynomial orders.
    Middle column: Shape of the polynomial correction, $\mathcal{P}(r)$, for the original SBM polynomial correction. Recall that $\mathcal{P}(r=-1)$ is the imposed correction.
    Right column: Shape of $\mathcal{P}(r)$ using the minimization-based polynomial corrections approaches of ROD-E (yellow) and ROD-$L^2$ (purple).}
    \label{fig:basis_functions}
\end{figure}

\begin{figure}
    \centering
    \includegraphics[scale=0.80]{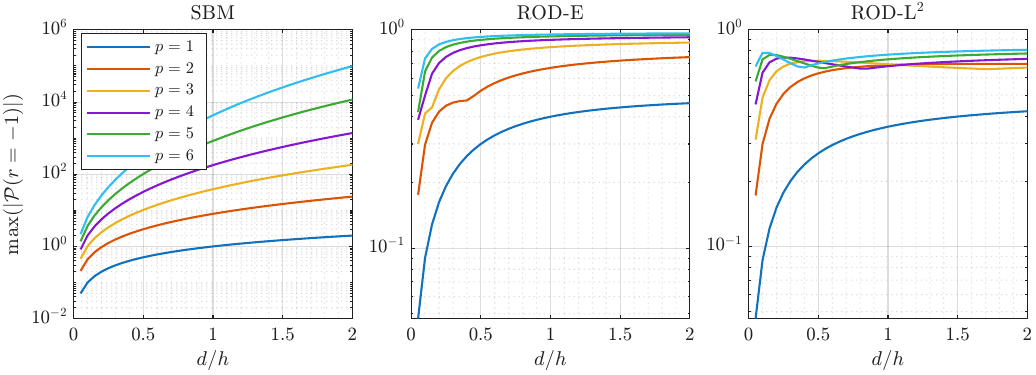}
    \caption{Evaluation of $\max |\mathcal{P}(r=-1)|$ for SBM, ROD-E, and ROD-L$^2$ with different $p$ and $d/h$.}
    \label{fig:DBC_maximum_basis}
\end{figure}

\newpage

\subsection{Convergence study for Dirichlet and Neumann boundary conditions}

Next, we consider a classical $h$-convergence study of fixed polynomial orders, $p \in \{2,4,6\}$ and report the metrics, $\|u-u_h\|_{\infty}$, $\|Au-b\|_{\infty}$, and $\kappa(A)$. Here, $u$ is a manufactured solution to the Poisson problem in \eqref{eq:Poisson_strong} with high-polynomial, asymmetric trigonometric, and exponential features, i.e., $u = u(x) = e^x + x^9 + \cos(x 2.3 \pi + 0.3)$ for $x \in [0,1]$. The numerical solution, $u_h$, is interpolated to a finer equidistant grid, whereon the error is evaluated, to circumvent any favorable super-convergent point selection. Moreover, $A$ and $b$ are the arising system matrix and vector from the bilinear and linear form in \eqref{eq:generalized_form}, respectively. The sparse linear system of equations, $A u_h = b$, is solved for $u_h$, using a classical Gaussian elimination procedure in Matlab using the \textit{backslash} operator. At last, $\kappa(A)$ measures the conditioning of the system matrix in terms of the condition number, which is important for the future use of efficient iterative solvers without the inclusion of stabilization terms to circumvent slow or unstable convergence. For this test, we consider the Poisson problem subject to Dirichlet and Neumann boundary conditions (one at a time), but, for the sake of brevity, we reserve the general Robin conditions for the 2D results. The 1D computational domain is constructed using $N_x = \{4,8,16,32\}$ elements, i.e., halving the domain at each increment, with a shifted boundary distance of $d/h = 1$.

The results for the Dirichlet boundary conditions can be seen in Figure \ref{fig:h_convergence_1D_DBC} and for Neumann boundary conditions in Figure \ref{fig:h_convergence_1D_NBC}. Ultimately, the expected error convergence rate in the solution, $\|u-u_h\|_{\infty}$, is observed, i.e., $\mathcal{O}(h^{p+1})$ (optimal) and $\mathcal{O}(h^{p})$ (sub-optimal) for Dirichlet and Neumann conditions, respectively. All methods provide comparable errors. Note that the sub-optimality of the Neumann case is due to the approximation accuracy of the first derivatives in \eqref{eq:Wp}. Moreover, $\mathcal{O}(h^{p})$ convergence rate is observed for $\|Au-b\|_{\infty}$ in both cases, with SBM providing significantly higher errors than all minimization-based methods. That is a direct consequence of matrix conditioning, as confirmed by considering $\kappa(A)$, where SBM provides significantly larger (multiple orders of magnitude) values than the minimization-based ones. In general, the conditioning scales with $\mathcal{O}(h^{-2})$, but for Dirichlet boundary conditions on coarse meshes with higher orders, $p \geq 4$, the scaling is only $\mathcal{O}(h^{-1})$. Similar non-$\mathcal{O}(h^{-2})$-scaling for high-orders was observed in \cite{gorgi2026isogeometric}, posing an \textit{open question}. One explanation could be that the condition number is governed in three regions: i) a classical asymptotic $\mathcal{O}(h^{-2})$ scaling region, ii) a $\mathcal{O}(d)$ dominated region, and iii) an insufficiently resolved region. Both i) and iii) are classical regions in boundary-fitted methods, but ii) is only valid for unfitted approaches and might also depend on the approximation order, $p$. Recall that in Figure \ref{fig:h_convergence_1D_DBC} and Figure \ref{fig:h_convergence_1D_NBC}, $d = h$. In general, we observe that reducing $d/h$ shifts the transition from ii) to i) occurring on coarser meshes. For $d/h = 0$, ii) has disappeared completely. Moreover, we observe that for a fixed $d/h$, increasing $p$ shifts the transition to finer meshes. This \textit{delayed asymptotic behavior} will be further examined in the 2D results, where $d/h$ is much more complicated. In general, all curves are converging in a smooth and asymptotic manner, due to the fixed $d/h$ quantity. This feature will be challenged when moving to 2D, as $d/h$ is uncontrollable and governed by the background mesh and the location of the true boundary. Lastly, ROD-E and ROD-E-w are visually similar. The same can be said about ROD-$L^2$ and ROD-$L^2$-w. Recall the formula of $\alpha$ from \eqref{eq:alpha_rod_e}, \eqref{eq:alpha_rod_L2}, \eqref{eq:alpha_rod_e_w}, and \eqref{eq:alpha_rod_L2_w} in the Dirichlet case and \eqref{eq:alpha_all_NBC} in the Neumann case. Here, the difference between the strong and the weak constrained formulations is an additional identity in the denominator, which, for the 1D case, is close to negligible. In 2D, however, this will not be the case, as will be shown later.

\textbf{Remark on effect of mapping and element selection}: As noted in Section \ref{sec:intro}, the high-order SBM polynomial correction work in \cite{visbech2025spectral} was motivated to improve the accuracy and conditioning of the original SBM by changing i) the element selection as discussed in Section \ref{sec:unfitted_comp_domain} and ii) the mapping in \eqref{eq:mapping}. One might question whether such an approach could yield further improvement of the work presented in this paper. Here, the authors argue (not shown, but based on experience) that no to minor improvements are to be expected since the methodology in \cite{visbech2025spectral} effectively bounds the polynomial correction inside the elements by choosing i) and ii) in a clever way. Similarly, in this work, we ensure that the polynomial correction (based on minimization) is bounded as showcased in Figure \ref{fig:basis_functions}. Thus, no additional benefit from bounding an already bounded polynomial.

\newpage

\begin{figure}
    \centering
    \includegraphics[scale=0.85]{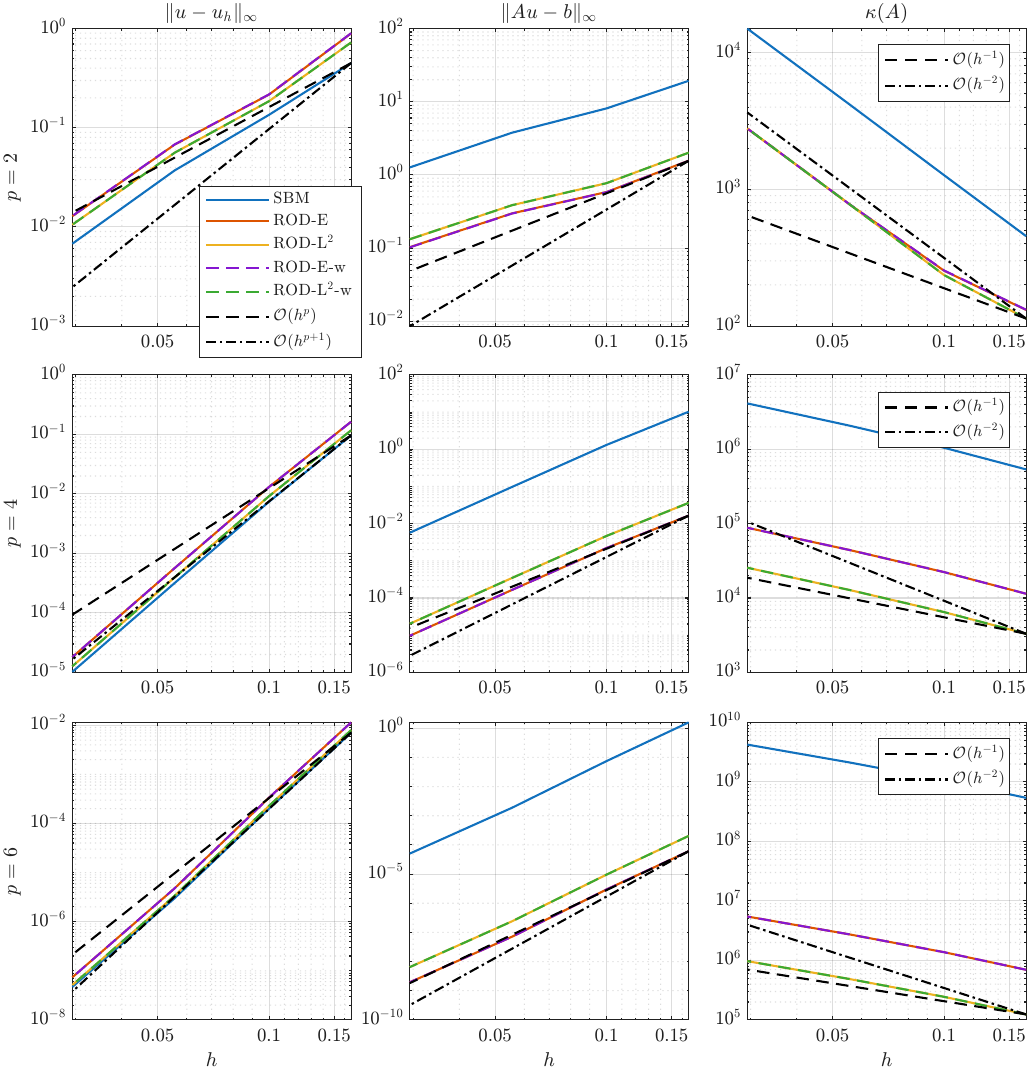}
    \caption{1D mesh refinement convergence study for $p\in\{2,4,6\}$ with Dirichlet boundary conditions and $d/h=1$.}
    \label{fig:h_convergence_1D_DBC}
\end{figure}

\newpage

\begin{figure}
    \centering
    \includegraphics[scale=0.85]{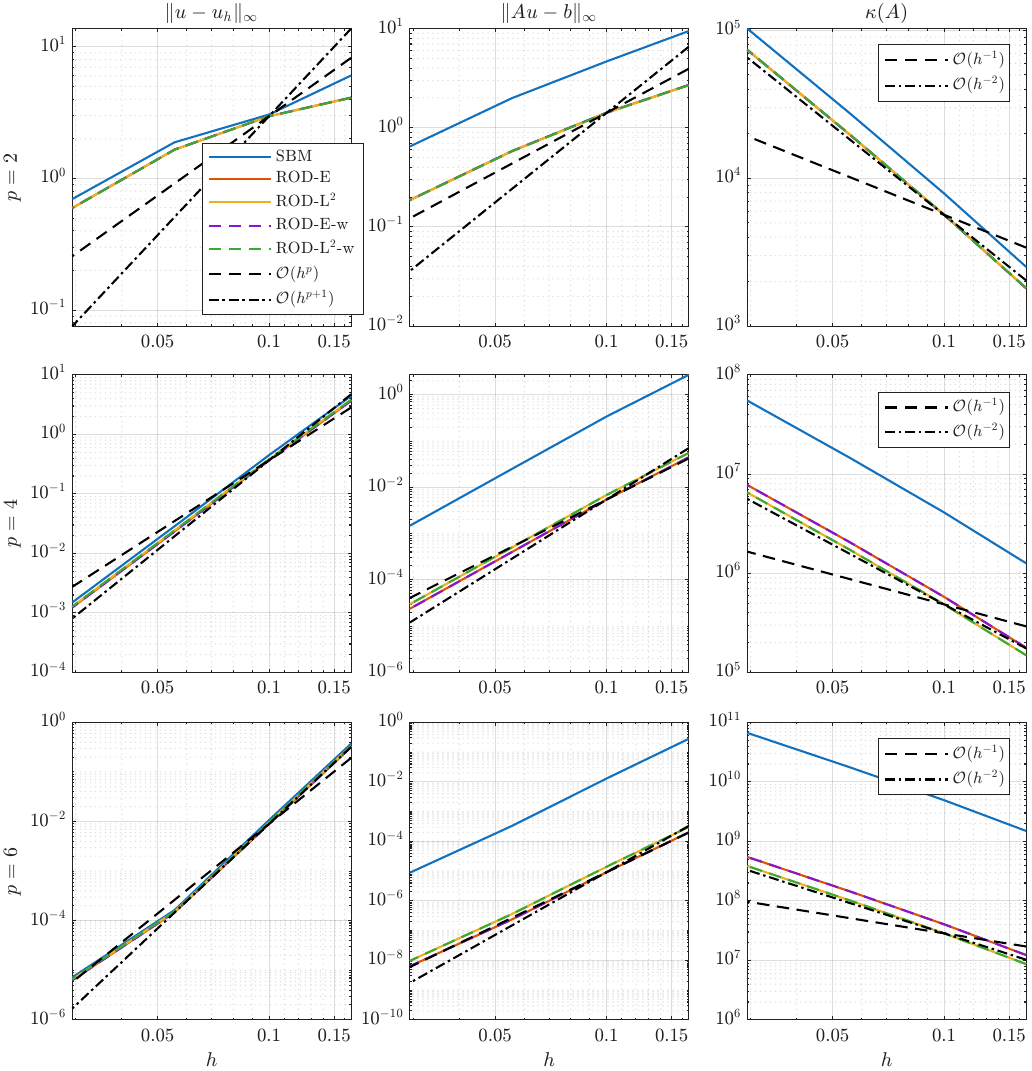}
    \caption{1D $h$-convergence study for $p\in\{2,4,6\}$ with Neumann boundary conditions and $d/h=1$.}
    \label{fig:h_convergence_1D_NBC}
\end{figure}

\section{Results in 2D}\label{sec:results_2D}

Next, we consider results in 2D. For this, we consider uniform background mesh of $N_x = N_y = N$ quadrilaterals for $0 \leq (x,y) \leq 2$. At $(x_0,y_0) = (1,1)$ a circle of radius $R = 0.5$ is located. As an manufactured verification solution, we consider $u = u(x,y) = cos(2\pi r^2)$, where $r = (x-x_0) + (y-y_0)$. Four different handpicked meshes using $N \in \{8,16,32,64\}$ are showcased in Figure \ref{fig:four_meshes}. As in the 1D case, errors, $\|u-u_h\|$, are interpolated (with matching high-order precision) to a finer, equidistant mesh, whereon the error is computed to eliminate possible super-convergent features from the GLL-point distribution.

\begin{figure}
    \centering
    \includegraphics[scale=0.75]{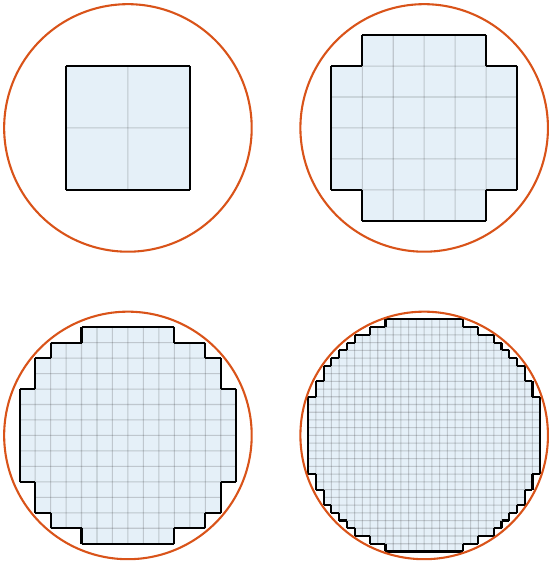}
    \caption{Four unfitted meshes of a circular domain with different resolutions of the background mesh with the notation of Figure \ref{fig:unfitted_domain_illustration}. Top-left: $N = 8$. Top-right: $N = 16$. Bottom-left: $N = 32$. Bottom-right: $N = 64$. }
    \label{fig:four_meshes}
\end{figure}

\subsection{Convergence study for Dirichlet and Neumann boundary conditions}

First, we consider the $h$-convergence study of the Poisson problem subject to Dirichlet and Neumann boundary conditions. We consider the order of the polynomial basis functions as $p \in \{1, \ldots,6\}$. As noted in, e.g., \cite{antonelli2026isogeometric}, the classical way of \textit{halving} the domain, i.e., $N \in \{2,4,8,\ldots\}$, when performing convergence studies could introduce a biased mesh configuration (either in- or unfavorable with respect to the convergence of any numerical metric). With that in mind, we defined uniform background meshes with $N_x = N_y = N \in \{8,9, \ldots,127,128\}$, i.e., successively increasing the resolution by one. We perform the mesh-refinement study in the following metrics: $\|u-u_h\|_{L^1}$, $\|Au-b\|_{L^1}$, and $\kappa(A)$. This, to establish a reference, using only SBM and ROD (see Section \ref{sec:SBcorrection} and \ref{sec:ROD}, respectively) with Dirichlet boundary conditions in Figure \ref{fig:h_convergence_2D_SBM_ROD_DBC} and Neumann boundary conditions in Figure \ref{fig:h_convergence_2D_SBM_ROD_NBC}, about which we can compare the other minimization-based approaches later. From the results, we fit a linear trend to estimate the asymptotic convergence rate for values with $h \leq 0.1$.

\begin{figure}
    \centering
    \includegraphics[scale=0.85]{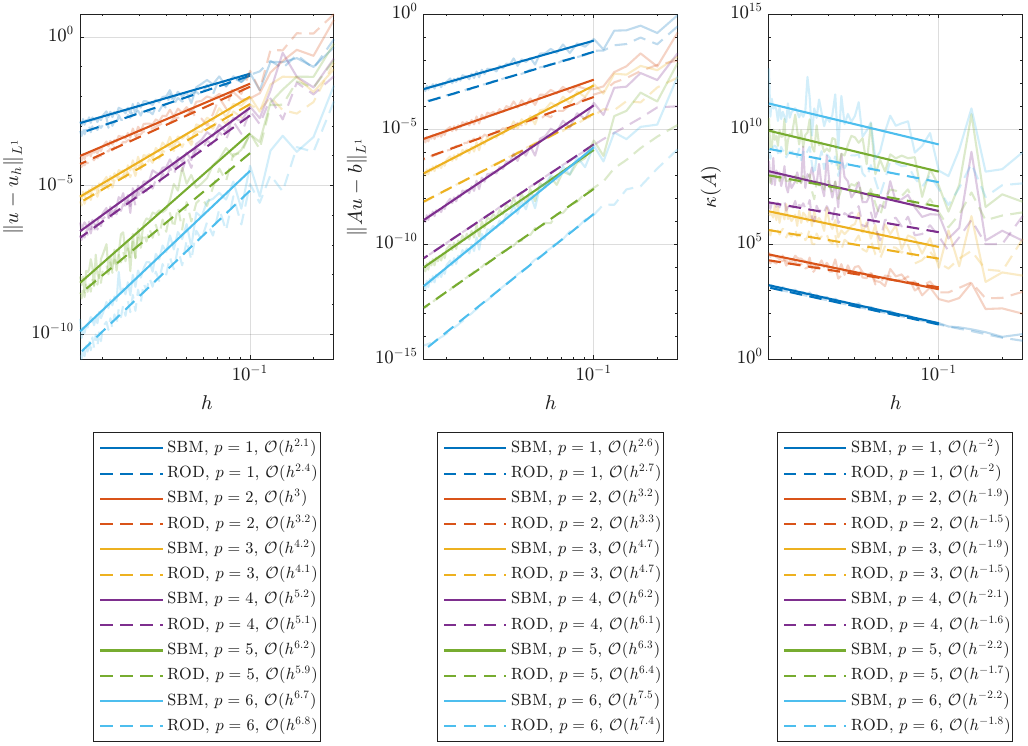}
    \caption{2D mesh refinement study for $p \in \{1, \ldots,6\}$ with Dirichlet conditions for SBM (solid lines) and ROD (dashed lines). Transparent lines are errors due to successive mesh refinement, and non-transparent lines are linear fits for errors below $h \leq 0.1$.}
    \label{fig:h_convergence_2D_SBM_ROD_DBC}
\end{figure}

From the convergence study of $\|u-u_h\|_{L^1}$, we confirm the expected optimal, $\mathcal{O}(h^{p+1})$, rate for pure Dirichlet problems, and the sub-optimal, $\mathcal{O}(h^{p})$ for pure Neumann problems by considering the linear fit values, $\mathcal{O}(h^m)$. Also, we note the highly mesh-dependent error level (transparent lines), oscillating from one mesh to the next; however, it remains convergent. This observation is well known in high-order unfitted methodologies. Moving the true geometry slightly on the same background mesh will, without a doubt, alter the solution. This phenomenon can be observed, as mentioned, not only when moving the object but also during successive mesh refinement \cite{gorgi2026isogeometric, antonelli2026isogeometric}. In the SBM setting, the weighted-SBM was proposed to smooth the time-dependent pressure signal of a moving cylinder \cite{colomes2021weighted}, and the Gap-SBM and high-order IGA Gap-SBM were proposed to mitigate this issue as well \cite{collins2026gap, gorgi2026isogeometric}. Circumventing this feature is not currently the focus of the work, yet we emphasize its existence. In general, the ROD method yields slightly lower level errors (between zero and one order of magnitude) in the solution variable compared with SBM. Moreover, the mesh-dependent error oscillations look to be more pronounced for SBM than for ROD, as a consequence of the minimization-based construction of the extended polynomial basis. 
Now, considering $\|Au-b\|_{L^1}$, we have a metric uninfluential by conditioning, which is clearly seen in the reduced oscillatory behavior of the errors under mesh-refinement, especially for the ROD method. Both approaches converge to optimality and are even slightly super-convergent in some cases. As shown in 1D, we observed lower errors for ROD, especially as $p$ increases, compared to SBM. 
Lastly, the conditioning in terms of $\kappa(A)$ yields about second order for the SBM. As for the ROD, two improvements are observed compared to SBM: i) multiple orders lower condition number for high orders, $p \geq 3$, and ii) improved scaling. On the second, we extend the discussion from 1D by highlighting that we again observe this above-optimal scaling, as pointed out in \cite{gorgi2026isogeometric}.

\begin{figure}
    \centering
    \includegraphics[scale=0.85]{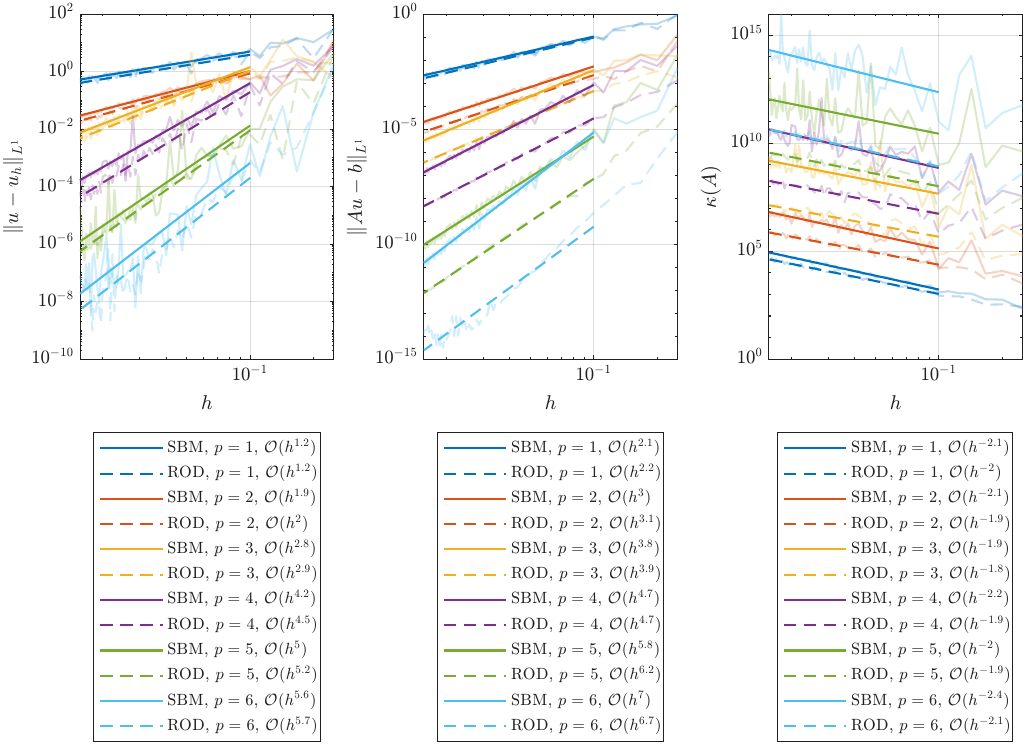}
    \caption{2D mesh refinement study for $p \in \{1, \ldots,6\}$ with Neumann conditions for SBM (solid lines) and ROD (dashed lines). Transparent lines show errors due to successive mesh refinement, and non-transparent lines are linear fits for errors below $h \leq 0.1$.}
    \label{fig:h_convergence_2D_SBM_ROD_NBC}
\end{figure}

\textbf{Remark on use of element types:} The authors note that using structured quadrilaterals instead of unstructured triangles, as presented in \cite{visbech2025spectral}, poses a reduction in the oscillatory behavior of various numerical quantities during successive mesh refinement (not shown). In fact, we argue that in the context of unfitted methods, such as the SBM, where the problem is solved on a computational domain generated from a background mesh, employing fully structured quadrilaterals in 2D and hexahedra in 3D is preferable. This approach effectively decouples mesh generation from the domain geometry. If higher-resolution regions are necessary, non-conformal mesh refinement strategies can be employed; see, e.g., \cite{yang2024optimal, gorgi2026isogeometric}.

\subsubsection{Comparing to minimization-based polynomial corrections}

Next, we extend the convergence study to include the minimization-based polynomial corrections, ROD-E, ROD-$L^2$, ROD-E-w, and  ROD-$L^2$-w. For this and for simplicity, we report only the estimates of the asymptotic convergence rate and compare them with the recently established ones for SBM and ROD for $p \in \{2,4,6\}$. Results for Dirichlet boundary conditions are shown in Figure \ref{fig:h_convergence_2D_all_DBC} and in Figure \ref{fig:h_convergence_2D_all_NBC} the Neumann case is reported. In short, the pattern is clear: all minimization-based polynomial corrections i) converge to optimality or sub-optimality as SBM and ROD, ii) mimic, to a large extent, the features of ROD (as expected) rather than SBM, iii) are somewhat bounded by ROD.

\begin{figure}
    \centering
    \includegraphics[scale=0.85]{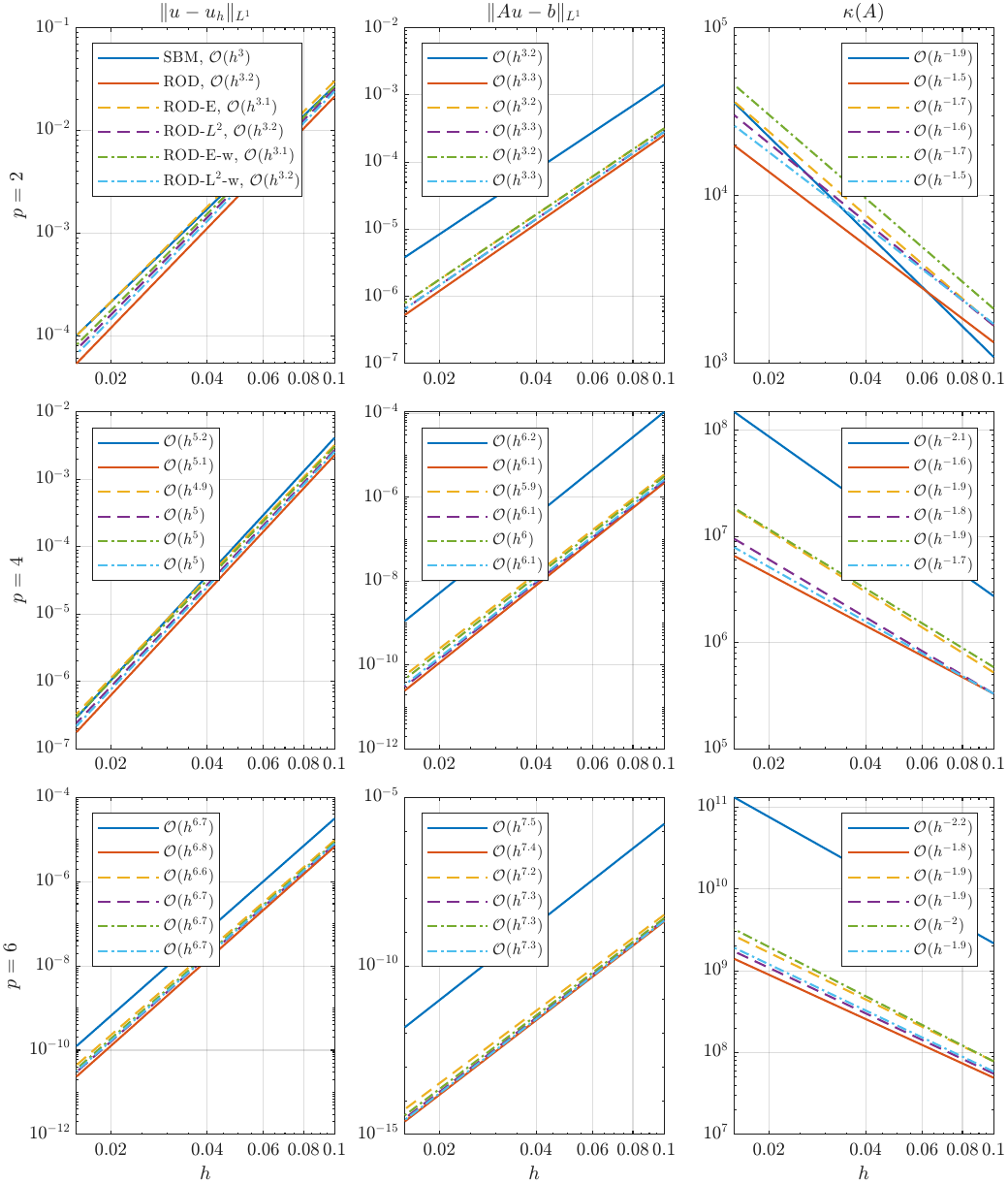}
    \caption{Complete 2D mesh-refinement study for all minimization-based polynomial corrections, SBM, and ROD for $p \in \{2,4,6\}$ in terms of estimated fits of the asymptotic convergence rate. Subject to Dirichlet boundary conditions.}
    \label{fig:h_convergence_2D_all_DBC}
\end{figure}

\begin{figure}
    \centering
    \includegraphics[scale=0.85]{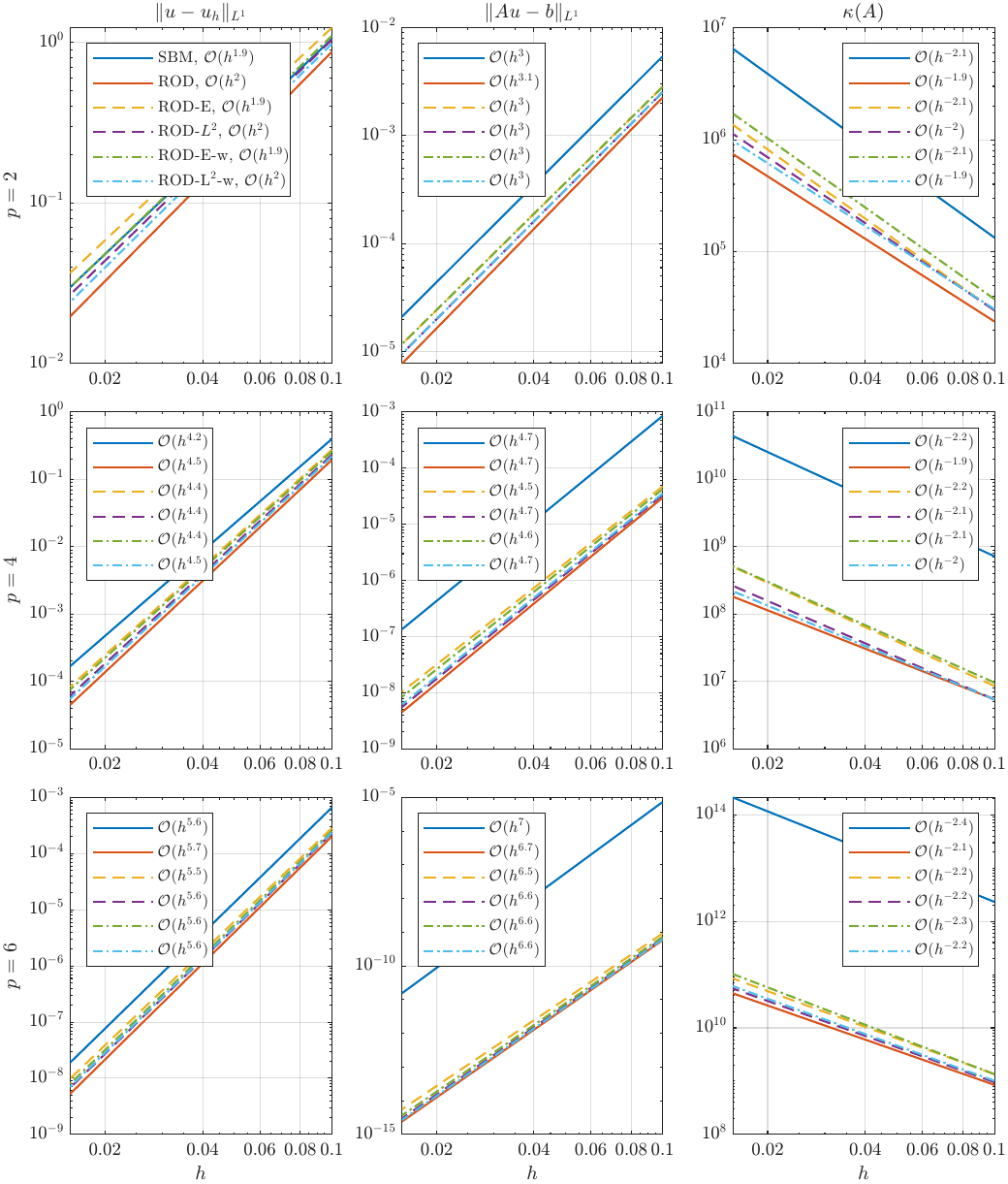}
    \caption{Complete 2D mesh-refinement study for all minimization-based polynomial corrections, SBM, and ROD for $p \in \{2,4,6\}$ in terms of estimated fits of the asymptotic convergence rate. Subject to Neumann boundary conditions.}
    \label{fig:h_convergence_2D_all_NBC}
\end{figure}

\subsection{Robin boundary conditions}

As a final numerical result, we consider the case in which the Poisson problem is subject to Robin boundary conditions. For this verification and for brevity, we solely consider the metric $\|u-u_h\|_{L^1}$ for $p\in\{4,5,6\}$ on four different background meshes with $N\in\{8,16,32,64\}$ as depicted in Figure \ref{fig:four_meshes}. Moreover, we run using the ROD-$L^2$-w polynomial correction. We run the usual convergence study for $\varepsilon \in \{0.1,1,10\}$ as shown in Figure \ref{fig:h_convergence_2D_RBC}. Convergence is confirmed; however, the finest mesh for $p=6$ might suffer from the aforementioned and shown unfavorable bias. Moreover, as expected, decreasing $\varepsilon$, i.e., mimicking Dirichlet boundary conditions, will decrease the error level.

\begin{figure}
    \centering
    \includegraphics[scale = 0.85]{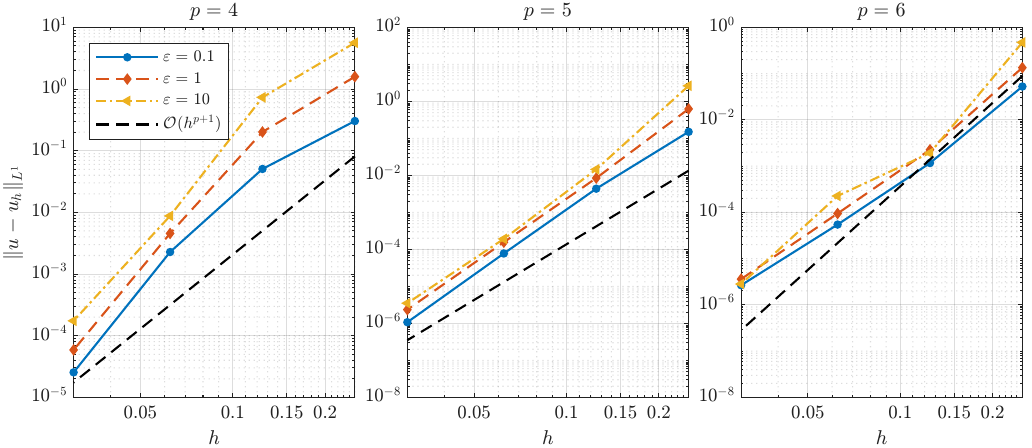}
    \caption{Mesh-refinement study for Robin conditions using ROD-$L^2$-w for $p\in\{4,5,6\}$ and $\varepsilon \in \{0.1,1,10\}$.}
    \label{fig:h_convergence_2D_RBC}
\end{figure}
\section{Conclusion}\label{sec:conclusion}

In this paper, we introduced a new family of generalized minimization-based polynomial corrections for imposing high-order unfitted (embedded) boundary conditions for the Poisson problem discretized with a spectral element method. The proposed approach generalizes the formulation of \cite{ciallella2023shifted, visbech2025spectral} within the shifted boundary method (SBM) while incorporating key ideas from the ROD method. By reformulating the original ROD minimization problem, the resulting boundary treatments can be expressed as simple polynomial corrections, eliminating the need to solve local linear systems. We showed that the proposed corrections differ from the original SBM polynomial correction only by a scaling factor, yet this seemingly minor modification fundamentally changes the method's behavior. A one-dimensional analysis demonstrated that the resulting basis functions remain bounded and as close as possible to their original counterparts, whereas the standard SBM basis functions become unbounded as the distance between the true and surrogate boundaries increases, particularly for higher polynomial orders. This significantly improves the conditioning of the discrete system with virtually no additional computational cost. The methodology was further extended from Dirichlet to Neumann and Robin boundary conditions, and numerical experiments confirmed that the resulting generalized polynomial corrections retain high-order accuracy on unfitted meshes.\\ \newline 
Future work includes extending the proposed corrections to time-dependent and nonlinear PDEs, such as the Navier-Stokes equations, as well as to problems involving moving interfaces, including multiphase flows and fluid-structure interaction \cite{monasse2012conservative,desmons2024fully}.

\subsection*{Acknowledgments}
Parts of the research were carried out at the Technical University of Denmark (DTU) in the Department of Applied Mathematics and Computer Science (Compute), with computational resources provided by the DTU Computing Center.

\subsection*{Declarations}
The authors declared that they had no potential conflicts of interest with respect to the research, authorship, and/or publication of this article. Generative AI was used solely to improve the grammar, spelling, and readability of this manuscript. The authors reviewed and edited all AI-assisted suggestions and take full responsibility for the final content.

\subsection*{Data availability}
Data can be made available on request.

\subsection*{Funding}
This work contributes to the activities of JV's PhD project, which is fully funded by DTU Compute.

\subsection*{CRediT authorship contribution statement}
Both authors contributed equally to this manuscript. More specifically: \textbf{Mirco Ciallella}: Conceptualization, Formal Analysis, Investigation, Methodology, Writing – Original Draft, Writing – Review \& Editing. \textbf{Jens Visbech}: Investigation, Methodology, Software, Validation, Visualization, Writing – Original Draft, Writing – Review \& Editing. 

\bibliographystyle{abbrvnat}
\bibliography{references.bib}  

@article{collins2026gap,
  title={{G}ap-{SBM}: A new conceptualization of the shifted boundary method with optimal convergence for the {N}eumann and {D}irichlet problems},
  author={Collins, J Haydel and Li, Kangan and Lozinski, Alexei and Scovazzi, Guglielmo},
  journal={Computer Methods in Applied Mechanics and Engineering},
  volume={453},
  pages={118793},
  year={2026},
  publisher={Elsevier}
}

@book{zienkiewicz1977finite,
  title={The finite element method},
  author={Zienkiewicz, Olgierd Cecil and Taylor, Robert Leroy and Nithiarasu, Perumal and Zhu, JZ},
  year={1977},
  publisher={Elsevier}
}

@article{colomes2021weighted,
  title={A weighted shifted boundary method for free surface flow problems},
  author={Colom{\'e}s, Oriol and Main, Alex and Nouveau, L{\'e}o and Scovazzi, Guglielmo},
  journal={Journal of Computational Physics},
  volume={424},
  pages={109837},
  year={2021},
  publisher={Elsevier}
}

@article{patera1984spectral,
  title={A spectral element method for fluid dynamics: laminar flow in a channel expansion},
  author={Patera, Anthony T},
  journal={Journal of Computational Physics},
  volume={54},
  number={3},
  pages={468--488},
  year={1984},
  publisher={Elsevier}
}

@article{ciallella2025minimization,
  title={Minimization-based embedded boundary methods as polynomial corrections: a stability study of discontinuous {G}alerkin for hyperbolic equations},
  author={Ciallella, Mirco},
  journal={arXiv preprint, 2512.05278},
  year={2025}
}

@article{scardovelli1999direct,
  title={Direct numerical simulation of free-surface and interfacial flow},
  author={Scardovelli, Ruben and Zaleski, St{\'e}phane},
  journal={Annual Review of Fluid Mechanics},
  volume={31},
  number={1},
  pages={567--603},
  year={1999},
  publisher={Annual Reviews 4139 El Camino Way, PO Box 10139, Palo Alto, CA 94303-0139, USA}
}

@article{tam2004computational,
  title={Computational aeroacoustics: An overview of computational challenges and applications},
  author={Tam, Christopher KW},
  journal={International Journal of Computational Fluid Dynamics},
  volume={18},
  number={6},
  pages={547--567},
  year={2004},
  publisher={Taylor \& Francis}
}

@article{imam1982three,
  title={Three-dimensional shape optimization},
  author={Imam, M Hasan},
  journal={International Journal for Numerical Methods in Engineering},
  volume={18},
  number={5},
  pages={661--673},
  year={1982},
  publisher={Wiley Online Library}
}

@article{beaugendre2003fensap,
  title={{FENSAP-ICE}'s three-dimensional in-flight ice accretion module: {ICE3D}},
  author={Beaugendre, H{\'e}lo{\"\i}se and Morency, Fran{\c{c}}ois and Habashi, Wagdi G},
  journal={Journal of Aircraft},
  volume={40},
  number={2},
  pages={239--247},
  year={2003}
}

@article{taylor2009patient,
  title={Patient-specific modeling of cardiovascular mechanics},
  author={Taylor, Charles A and Figueroa, CA},
  journal={Annual Review of Biomedical Engineering},
  volume={11},
  number={1},
  pages={109--134},
  year={2009},
  publisher={Annual Reviews}
}

@article{ciallella2025stability,
  title={Stability analysis of discontinuous {G}alerkin with a high order embedded boundary treatment for linear hyperbolic equations},
  author={Ciallella, Mirco},
  journal={arXiv preprint, 2510.23231},
  year={2025}
}

@article{santos2024very,
  title={{Very high-order accurate discontinuous {G}alerkin method for curved boundaries with polygonal meshes}},
  author={Santos, Milene and Ara{\'u}jo, Ad{\'e}rito and Barbeiro, S{\'\i}lvia and Clain, St{\'e}phane and Costa, Ricardo and Machado, Gaspar J},
  journal={Journal of Scientific Computing},
  volume={100},
  number={3},
  pages={66},
  year={2024},
  publisher={Springer}
}

@article{costa2018very,
  title={Very high-order accurate finite volume scheme on curved boundaries for the two-dimensional steady-state convection--diffusion equation with {D}irichlet condition},
  author={Costa, Ricardo and Clain, St{\'e}phane and Loub{\`e}re, Rapha{\"e}l and Machado, Gaspar J},
  journal={Applied Mathematical Modelling},
  volume={54},
  pages={752--767},
  year={2018},
  publisher={Elsevier}
}

@article{costa2019very,
  title={Very high-order accurate finite volume scheme for the convection-diffusion equation with general boundary conditions on arbitrary curved boundaries},
  author={Costa, Ricardo and N{\'o}brega, Jo{\~a}o M and Clain, St{\'e}phane and Machado, Gaspar J and Loub{\`e}re, Rapha{\"e}l},
  journal={International Journal for Numerical Methods in Engineering},
  volume={117},
  number={2},
  pages={188--220},
  year={2019},
  publisher={Wiley Online Library}
}

@article{ciallella2023shifted,
  title={Shifted boundary polynomial corrections for compressible flows: high order on curved domains using linear meshes},
  author={Ciallella, Mirco and Gaburro, Elena and Lorini, Marco and Ricchiuto, Mario},
  journal={Applied Mathematics and Computation},
  volume={441},
  pages={127698},
  year={2023},
  publisher={Elsevier}
}

@article{ciallella2024very,
  title={Very high order treatment of embedded curved boundaries in compressible flows: {ADER} discontinuous {G}alerkin with a space-time reconstruction for off-site data},
  author={Ciallella, Mirco and Clain, Stephane and Gaburro, Elena and Ricchiuto, Mario},
  journal={Computers \& Mathematics with Applications},
  volume={175},
  pages={1--18},
  year={2024},
  publisher={Elsevier}
}

@article{burman2014fictitious,
  title={Fictitious domain methods using cut elements: {III}. A stabilized {N}itsche method for {S}tokes' problem},
  author={Burman, Erik and Hansbo, Peter},
  journal={ESAIM: Mathematical Modelling and Numerical Analysis},
  volume={48},
  number={3},
  pages={859--874},
  year={2014}
}

@article{burman2015cutfem,
  title={{CutFEM}: discretizing geometry and partial differential equations},
  author={Burman, Erik and Claus, Susanne and Hansbo, Peter and Larson, Mats G and Massing, Andr{\'e}},
  journal={International Journal for Numerical Methods in Engineering},
  volume={104},
  number={7},
  pages={472--501},
  year={2015},
  publisher={Wiley Online Library}
}

@article{collins2023penalty,
  title={A penalty-free shifted boundary method of arbitrary order},
  author={Collins, J Haydel and Lozinski, Alexei and Scovazzi, Guglielmo},
  journal={Computer Methods in Applied Mechanics and Engineering},
  volume={417},
  pages={116301},
  year={2023},
  publisher={Elsevier}
}

@article{burman2025cut,
  title={Cut finite element methods},
  author={Burman, Erik and Hansbo, Peter and Larson, Mats G and Zahedi, Sara},
  journal={Acta Numerica},
  volume={34},
  pages={1--121},
  year={2025},
  publisher={Cambridge University Press}
}

@article{BOSCHERI2025114215,
title = {{High order treatment of moving curved boundaries: Arbitrary-{L}agrangian-Eulerian methods with a shifted boundary polynomial correction}},
journal = {Journal of Computational Physics},
volume = {539},
pages = {114215},
year = {2025},
author = {Walter Boscheri and Mirco Ciallella},
}

@article{boffi2003finite,
  title={A finite element approach for the immersed boundary method},
  author={Boffi, Daniele and Gastaldi, Lucia},
  journal={Computers \& Structures},
  volume={81},
  number={8-11},
  pages={491--501},
  year={2003},
  publisher={Elsevier}
}

@article{cottet2008eulerian,
  title={Eulerian formulation and level set models for incompressible fluid-structure interaction},
  author={Cottet, Georges-Henri and Maitre, Emmanuel and Milcent, Thomas},
  journal={ESAIM: Mathematical Modelling and Numerical Analysis},
  volume={42},
  number={3},
  pages={471--492},
  year={2008},
}

@article{leveque1994immersed,
  title={The immersed interface method for elliptic equations with discontinuous coefficients and singular sources},
  author={LeVeque, Randall J and Li, Zhilin},
  journal={SIAM Journal on Numerical Analysis},
  volume={31},
  number={4},
  pages={1019--1044},
  year={1994},
  publisher={SIAM}
}

@article{boffi2015finite,
  title={The finite element immersed boundary method with distributed {L}agrange multiplier},
  author={Boffi, Daniele and Cavallini, Nicola and Gastaldi, Lucia},
  journal={SIAM Journal on Numerical Analysis},
  volume={53},
  number={6},
  pages={2584--2604},
  year={2015},
  publisher={SIAM}
}

@article{peskin1977numerical,
  title={Numerical analysis of blood flow in the heart},
  author={Peskin, Charles S},
  journal={Journal of Computational Physics},
  volume={25},
  number={3},
  pages={220--252},
  year={1977},
  publisher={Elsevier}
}

@article{peskin2002immersed,
  title={The immersed boundary method},
  author={Peskin, Charles S},
  journal={Acta Numerica},
  volume={11},
  pages={479--517},
  year={2002},
  publisher={Cambridge University Press}
}

@article{strouboulis2000design,
  title={The design and analysis of the generalized finite element method},
  author={Strouboulis, Theofanis and Babu{\v{s}}ka, Ivo and Copps, Kevin},
  journal={Computer Methods in Applied Mechanics and Engineering},
  volume={181},
  number={1-3},
  pages={43--69},
  year={2000},
  publisher={Elsevier}
}

@article{mittal2005immersed,
  title={Immersed boundary methods},
  author={Mittal, Rajat and Iaccarino, Gianluca},
  journal={Annual Review of Fluid Mechanics},
  volume={37},
  number={1},
  pages={239--261},
  year={2005},
  publisher={Annual Reviews}
}

@article{hansbo2002unfitted,
  title={An unfitted finite element method, based on {N}itsche's method, for elliptic interface problems},
  author={Hansbo, Anita and Hansbo, Peter},
  journal={Computer Methods in Applied Mechanics and Engineering},
  volume={191},
  number={47-48},
  pages={5537--5552},
  year={2002},
  publisher={Elsevier}
}

@article{burman2010ghost,
  title={Ghost penalty},
  author={Burman, Erik},
  journal={Comptes Rendus. Math{\'e}matique},
  volume={348},
  number={21-22},
  pages={1217--1220},
  year={2010}
}

@article{monasse2012conservative,
  title={A conservative coupling algorithm between a compressible flow and a rigid body using an embedded boundary method},
  author={Monasse, Laurent and Daru, Virginie and Mariotti, Christian and Piperno, Serge and Tenaud, Christian},
  journal={Journal of Computational Physics},
  volume={231},
  number={7},
  pages={2977--2994},
  year={2012},
  publisher={Elsevier}
}

@article{atallah2022high,
  title={The high-order shifted boundary method and its analysis},
  author={Atallah, Nabil M and Canuto, Claudio and Scovazzi, Guglielmo},
  journal={Computer Methods in Applied Mechanics and Engineering},
  volume={394},
  pages={114885},
  year={2022},
  publisher={Elsevier}
}

@article{main2018shifted,
  title={The shifted boundary method for embedded domain computations. {P}art {I}: {P}oisson and {S}tokes problems},
  author={Main, Alex and Scovazzi, Guglielmo},
  journal={Journal of Computational Physics},
  volume={372},
  pages={972--995},
  year={2018},
  publisher={Elsevier}
}

@article{visbech2025spectral,
  title={A spectral element solution of the {P}oisson equation with shifted boundary polynomial corrections: influence of the surrogate to true boundary mapping and an asymptotically preserving Robin formulation},
  author={Visbech, Jens and Engsig-Karup, Allan P and Ricchiuto, Mario},
  journal={Journal of Scientific Computing},
  volume={102},
  number={1},
  pages={11},
  year={2025},
  publisher={Springer}
}

@article{atallah2025high,
  title={A high-order Shifted Interface Method for {L}agrangian shock hydrodynamics},
  author={Atallah, Nabil M and Mittal, Ketan and Scovazzi, Guglielmo and Tomov, Vladimir Z},
  journal={Journal of Computational Physics},
  volume={523},
  pages={113637},
  year={2025},
  publisher={Elsevier}
}

@article{song2018shifted,
  title={The shifted boundary method for hyperbolic systems: Embedded domain computations of linear waves and shallow water flows},
  author={Song, Ting and Main, Alex and Scovazzi, Guglielmo and Ricchiuto, Mario},
  journal={Journal of Computational Physics},
  volume={369},
  pages={45--79},
  year={2018},
  publisher={Elsevier}
}

@article{desmons2024fully,
  title={Fully {E}ulerian models for the numerical simulation of capsules with an elastic bulk nucleus},
  author={Desmons, Florian and Milcent, Thomas and Salsac, Anne-Virginie and Ciallella, Mirco},
  journal={Journal of Fluids and Structures},
  volume={127},
  pages={104109},
  year={2024},
  publisher={Elsevier}
}

@article{ciallella2025semi,
  title={Semi-implicit Eulerian method for the fluid structure interaction of elastic membranes},
  author={Ciallella, Mirco and Milcent, Thomas},
  journal={SIAM Journal on Scientific Computing},
  volume={47},
  number={3},
  pages={A1555--A1578},
  year={2025},
  publisher={SIAM}
}

@article{colomes2026generalized,
  title={The generalized shifted boundary method for geometry-parametric {PDE}s and time-dependent domains},
  author={Colom{\'e}s, Oriol and Modderman, Jan and Scovazzi, Guglielmo},
  journal={Computer Methods in Applied Mechanics and Engineering},
  volume={452},
  pages={118748},
  year={2026},
  publisher={Elsevier}
}

@article{gorgi2026isogeometric,
  title={The Isogeometric {G}ap-Shifted Boundary Method},
  author={Gorgi, Andrea and Antonelli, Nicol{\`o} and Cornejo, Alejandro and Scovazzi, Guglielmo and Rossi, Riccardo},
  journal={Available at SSRN 6179147},
  year={2026}
}

@article{antonelli2026isogeometric,
  title={Isogeometric multipatch coupling with arbitrary refinement and parametrization using the {G}ap--Shifted Boundary Method},
  author={Antonelli, Nicol{\`o} and Gorgi, Andrea and Zorrilla, Rub{\'e}n and Rossi, Riccardo},
  journal={Computer Methods in Applied Mechanics and Engineering},
  volume={456},
  pages={118913},
  year={2026},
  publisher={Elsevier}
}

@article{benzaken2024constructing,
  title={Constructing {N}itsche’s Method for Variational Problems},
  author={Benzaken, Joseph and Evans, John A and Tamstorf, Rasmus},
  journal={Archives of Computational Methods in Engineering},
  volume={31},
  number={4},
  pages={1867--1896},
  year={2024},
  publisher={Springer}
}

@article{juntunen2009nitsche,
  title={{N}itsche’s method for general boundary conditions},
  author={Juntunen, Mika and Stenberg, Rolf},
  journal={Mathematics of Computation},
  volume={78},
  number={267},
  pages={1353--1374},
  year={2009}
}

@article{Nitsche1971,
   author = {Joachim Nitsche},
   issue = {1},
   journal = {Abhandlungen aus dem mathematischen Seminar der Universität Hamburg},
   pages = {9-15},
   publisher = {Springer},
   title = {Über ein Variationsprinzip zur Lösung von {D}irichlet-Problemen bei Verwendung von Teilräumen, die keinen Randbedingungen unterworfen sind},
   volume = {36},
   year = {1971},
}

@article{ronquist1987legendre,
  title={A {L}egendre spectral element method for the {S}tefan problem},
  author={R{\o}nquist, Einar M and Patera, Anthony T},
  journal={International Journal for Numerical Methods in Engineering},
  volume={24},
  number={12},
  pages={2273--2299},
  year={1987},
  publisher={Wiley Online Library}
}

@book{karniadakis2005spectral,
    title = {{Spectral/hp element methods for computational fluid dynamics}},
    year = {2005},
    author = {Karniadakis, George and Sherwin, Spencer},
    publisher = {OUP Oxford}
}

@book{hesthaven2008nodal,
  title={Nodal discontinuous {G}alerkin methods: algorithms, analysis, and applications},
  author={Hesthaven, Jan S and Warburton, Tim},
  year={2008},
  publisher={Springer}
}

@article{yang2024optimal,
  title={Optimal surrogate boundary selection and scalability studies for the shifted boundary method on octree meshes},
  author={Yang, Cheng-Hau and Saurabh, Kumar and Scovazzi, Guglielmo and Canuto, Claudio and Krishnamurthy, Adarsh and Ganapathysubramanian, Baskar},
  journal={Computer Methods in Applied Mechanics and Engineering},
  volume={419},
  number={C},
  year={2024},
  publisher={Elsevier}
}

@article{ciallella2022extrapolated,
  title={Extrapolated {D}iscontinuity Tracking for complex {2D} shock interactions},
  author={Ciallella, Mirco and Ricchiuto, Mario and Paciorri, Renato and Bonfiglioli, Aldo},
  journal={Computer Methods in Applied Mechanics and Engineering},
  volume={391},
  pages={114543},
  year={2022},
  publisher={Elsevier}
}

@inproceedings{visbech2026recent,
  title={Recent progress on modeling nonlinear wave propagation and wave-structure interaction using a high-order shifted boundary method: Capabilities, challenges, and perspectives},
  author={Visbech, Jens and Engsig-Karup, Allan Peter and Bingham, Harry B and Ricchiuto, Mario},
  booktitle={Proceedings: The 41st International Workshop on Water Waves and Floating Bodies, Plitvice, Croatia},
  year={2026},
}

@article{visbech2025fnpf,
  title={{FNPF-SEM}: A parallel spectral element model in {F}iredrake for fully nonlinear water wave simulations},
  author={Visbech, Jens and Melander, Anders and Engsig-Karup, Allan Peter},
  journal={The International Journal of High Performance Computing Applications},
  pages={10943420261462394},
  year={2025},
  publisher={SAGE Publications Sage UK: London, England}
}

@article{engsigkarup2016stabilised,
    title = {{A stabilised nodal spectral element method for fully nonlinear water waves}},
    year = {2016},
    journal = {Journal of Computational Physics},
    author = {Engsig-Karup, A P and Eskilsson, C and Bigoni, D},
    pages = {1--21},
    volume = {318},
}

@article{pind2019time,
  title={Time domain room acoustic simulations using the spectral element method},
  author={Pind, Finnur and Engsig-Karup, Allan P and Jeong, Cheol-Ho and Hesthaven, Jan S and Mejling, Mikael S and Str{\o}mann-Andersen, Jakob},
  journal={The Journal of the Acoustical Society of America},
  volume={145},
  number={6},
  pages={3299--3310},
  year={2019},
  publisher={AIP Publishing}
}

@article{melander2025high,
  title={A High-Order Hybrid-Spectral Incompressible {N}avier-{S}tokes Model for Non-Linear Water Waves},
  author={Melander, Anders and Ebstrup Bitsch, Max and Chen, Dong and Peter Engsig-Karup, Allan},
  journal={International Journal for Numerical Methods in Fluids},
  volume={97},
  number={6},
  pages={1009--1021},
  year={2025},
  publisher={Wiley Online Library}
}

@article{carlier2023enriched,
  title={An enriched shifted boundary method to account for moving fronts},
  author={Carlier, Tiffanie and Nouveau, L{\'e}o and Beaugendre, Heloise and Colin, Mathieu and Ricchiuto, Mario},
  journal={Journal of Computational Physics},
  volume={489},
  pages={112295},
  year={2023},
  publisher={Elsevier}
}


\end{document}